\documentclass[USenglish,reqno,intlimits]{amsart}
\usepackage{amssymb, amsthm, amscd, mathrsfs}

\usepackage[affil-it]{authblk} 
\usepackage[foot]{amsaddr}

\usepackage{paralist}

\edef\restoreparindent{\parindent=\the\parindent\relax}
\usepackage{parskip}
\restoreparindent

\usepackage{bigints}
\usepackage{empheq}
\usepackage{cases}
\usepackage{enumitem}
\usepackage{dsfont}
\usepackage{cite}
\usepackage{float}

\usepackage{hyperref}
%\usepackage[colorinlistoftodos]{todonotes}

%%%%%%%%%%%%%%%%%%%%%%%%%%%%%%%%%%%%%%%%%%%%%%
\textheight 22.7truecm \textwidth 16.5truecm
\setlength{\oddsidemargin}{0.35in}\setlength{\evensidemargin}{0.35in}

\setlength{\topmargin}{-.5cm}
%%%%%%%%%%%%%%%%%%%%%%%%%%%%%%%%%%%%%%%%%%%%%%
\makeatletter
\def\specialsection{\@startsection{section}{1}%
  \z@{\linespacing\@plus\linespacing}{.5\linespacing}%
%  {\normalfont\centering}}% DELETED
  {\raggedright\sffamily\LARGE\bfseries}}% NEW
\def\section{\@startsection{section}{1}%
  \z@{.9\linespacing\@plus\linespacing}{.7\linespacing}%
%  {\normalfont\scshape\centering}}% DELETED
  {\raggedright\sffamily\LARGE\bfseries}}% NEW
  \renewcommand{\@secnumfont}{\raggedright\sffamily\Large\bfseries}
\makeatother

\makeatletter
\def\subsection{\@startsection{subsection}{2}%
  \z@{.9\linespacing\@plus.7\linespacing}{.7\linespacing}%
  {\raggedright\sffamily\Large\bfseries}}
\makeatother
%%%%%%%%%%%%%%%%%%%%%%%%%%%%%%%%%%%%%%%%%%%%%%
\newtheoremstyle{dthm}
  {.5\baselineskip}
  {.5\baselineskip}
  {\itshape}
  {}
  {\sffamily\bfseries}
  {\ifx\thmnote\@gobble.\else\normalfont.\fi}
  {.5em}
  {}
\newtheoremstyle{ddef}
  {.5\baselineskip}
  {.5\baselineskip}
  {\normalfont}
  {}
  {\sffamily\bfseries}
  {\ifx\thmnote\@gobble.\else\normalfont.\fi}
  {.5em}
  {}
%%%%%%%%%%%%%%%%%%%%%%%%%%%%%%%%%%%%%%%%%%%%%%%%
\theoremstyle{dthm}
\newtheorem{theorem}{Theorem}
\newtheorem{corollary}{Corollary}
\newtheorem{lemma}{Lemma}
\newtheorem{proposition}{Proposition}
\theoremstyle{ddef}
\newtheorem{definition}{Definition}
\newtheorem{remark}{Remark}

\def \dv {\mathrm{div}}
\def \d {\mathrm{d}}

\makeatletter
\def\@settitle{\begin{center}%
  \baselineskip14\p@\relax
    %\bfseries
    \raggedright\sffamily\Huge\bfseries%<- NEW
\uppercasenonmath\@title
  \@title
  \end{center}%
}
\makeatother
%% Place the running title of the paper with 40 letters or less in []
 %% and the full title of the paper in { }.
 
%\textsf{E. M. A\MakeLowercase{it} B\MakeLowercase{en} H\MakeLowercase{assi}}, S. E. C\MakeLowercase{horfi}, \MakeLowercase{and} L. M\MakeLowercase{aniar}
 
\title[I\MakeLowercase{nverse problems for general parabolic systems and application}] %Use the shortened version of the full title
      {I\MakeLowercase{nverse problems for general parabolic systems and application to} O\MakeLowercase{rnstein}-U\MakeLowercase{hlenbeck equation}}

%\author{\textsf{E. M. A\MakeLowercase{it} B\MakeLowercase{en} H\MakeLowercase{assi}}, S. E. C\MakeLowercase{horfi}, \MakeLowercase{and} L. M\MakeLowercase{aniar}}

\address{\textsf{\textbf{E. M. Ait Ben Hassi, S. E. Chorfi, L. Maniar,} Department of Mathematics, Faculty of Sciences Semlalia, LMDP, UMMISCO (IRD-UPMC), Cadi Ayyad University, Marrakesh 40000, B.P. 2390, Morocco.}}

\email{m.benhassi@gmail.com}	
\email{chorphi@gmail.com}	
\email{maniar@uca.ma}

%%%%%%%%%%%%%%%%%%%%%%%%%%%%%%%%%%%%%%%%%%%%%%
\patchcmd{\abstract}{3pc}{0pt}{}{}

%%%%%%%%%%%%%%%%%%%%%%%%%%%%%%%%%%%%%%%%%%%%%%

% It is required to enter 2010 MSC.
%\subjclass[2020]{Primary: 35R30; Secondary: 35K57.}
% Please provide minimum  5 keywords.
%\keywords{Inverse problem, Carleman estimate, Lipschitz stability, logarithmic stability, dynamic boundary conditions.}

\begin{document}
\begin{abstract}
\normalsize
We investigate the link between inverse problems and final state observability for a general class of parabolic systems. We generalize a stability result for initial data due to García and Takahashi \cite{GaT'11}, known for the case of self-adjoint dissipative operators. More precisely, we consider a system governed by an analytic semigroup. Under the assumption of final state observability, we prove a logarithmic stability estimate depending on the analyticity angle of the semigroup. This is done by using a general logarithmic convexity result. The abstract result is illustrated by considering the Ornstein–Uhlenbeck equation.

\bigskip
\noindent \textsf{\textbf{Keywords.}} Inverse problems, observability, abstract parabolic equations, logarithmic convexity,\\ Ornstein–Uhlenbeck equation

\bigskip
\noindent \textsf{\textbf{MSC 2020.}} 35R30, 93C25, 93B07, 35K90, 35K65
\end{abstract}

\noindent \textsf{\Large E. M. Ait Ben Hassi, S. E. Chorfi, and L. Maniar}
\vspace{-0.1cm}

\maketitle

\section{Introduction}
Let $H,Y$ be Hilbert spaces and $A \colon D(A)\subset H \rightarrow H$ be the generator of an analytic semigroup $\left(\mathrm{e}^{tA}\right)_{t\ge0}$ of angle $\psi \in \left(0,\dfrac{\pi}{2}\right]$. We denote by $H_A$ the space $D(A)$ endowed with the graph norm. Consider the following observation system
\begin{empheq}[left = \empheqlbrace]{alignat=2}
\begin{aligned}
& u'(t)=A u(t), \quad t \in (0,\theta],\\
& u(0)=u_0 \in H, \label{E1}
\end{aligned}
\end{empheq}
where $\theta>0$ is a final time for the system. Consider an admissible observation operator $\mathbf{C}\in \mathcal{L}(H_A, Y)$ for $\left(\mathrm{e}^{tA}\right)_{t\ge0}$. That is, there exists a constant $\kappa_{\theta}> 0$ such that
\begin{equation}
\forall u_0\in D(A), \qquad \int_{0}^{\theta}\left\|\mathbf{C} \mathrm{e}^{t A}u_0\right\|_{Y}^{2}\,\mathrm{d}t \le \kappa_\theta^2 \|u_0\|_H^2. \label{adm1}
\end{equation}
Recall that the system \eqref{E1} is final state observable in time $\theta$ if there exists a constant $\kappa_{\mathrm{obs}}(\theta)> 0$ such that the following observability inequality holds
\begin{equation}
\forall u_0\in D(A), \qquad \left\|\mathrm{e}^{\theta A}u_{0}\right\|_{H}^2 \leq \kappa_{\text{obs}}^2\int_{0}^{\theta}\left\|\mathbf{C} \mathrm{e}^{t A}u_0\right\|_{Y}^{2}\,\mathrm{d}t. \label{obs1}
\end{equation}
It is well-known that the notion of final state observability is equivalent to null-controllability of the predual system, see, for instance, \cite{TW'09,Za'20}.

\textbf{Inverse initial data problem.} 
For fixed constants $\epsilon \in (0,1)$ and $M>0$, we denote the set of admissible initial data by
\begin{align}
\mathcal{I}_{\epsilon,M}:=\{u_0\in D((-A)^\epsilon) \colon \|u_0\|_{D((-A)^\epsilon)} \le M\}. \label{init1}\
\end{align}
Our purpose is to determine initial conditions $u_0$ belonging to $\mathcal{I}_{\epsilon,M}$ from the measurement $\mathbf{C}u(t)$ in $(0,\theta)$ and obtain a conditional logarithmic stability estimate. Conditional stability in this context means that a priori bound is imposed on the initial data which are taken in an admissible set such as $\mathcal{I}_{\epsilon,M}$. It is worth mentioning that conditional stability estimates are very useful when dealing with numerical reconstruction of initial data \cite{LYZ'09, YZ'01}.

The prototypical example of system \eqref{E1} within the framework of observability and inverse problems is the heat equation in $L^2(\Omega)$, with $\Omega \subset \mathbb{R}^N$ is an open bounded domain. Namely, the case when $A =\Delta_\mathrm{D}$ is the Dirichlet Laplacian, which is a self-adjoint operator. The observation is taken either in a small interior non-empty open set $\omega\subset \Omega$ or on a part of the boundary $\Gamma_0 \subset \partial \Omega$. i.e., $\mathbf{C}u=\mathds{1}_\omega u$ or $\mathbf{C}u=\partial_\nu u\rvert_{\Gamma_0}$.

The observability and inverse problems for such equation have been studied since the pioneering works by Bukhgeim and Klibanov \cite{BK'81}, and Fursikov and Imanuvilov \cite{FI'96}. The connection between the two problems was first remarked by Puel and Yamamoto \cite{PY'96} and then investigated by many other authors \cite{ASTT'09, ACT'19, GaT'11, YZ'08}. In \cite{ASTT'09}, the authors have established stability estimate for a source term for exactly observable systems. The abstract results have been applied to the Euler-Bernoulli plate equation. The survey paper \cite{ACT'19} reviews some recent results on solving inverse problems by using observability estimates.

In \cite{YZ'08}, Yamamoto and Zou have proven a logarithmic stability result for the inverse initial data problem for the standard heat equation using a final observability estimate for $u\rvert_{(\tau, \theta)\times \omega}$ where $\tau>0$. The authors used the method of logarithmic convexity from \cite{Pa'75} for the operator $\Delta_\mathrm{D}$. Using the same ideas, similar results were obtained in a more abstract framework for self-adjoint dissipative operators in Hilbert spaces \cite{GaT'11} with application to Stokes systems and a linear fluid-structure system. The logarithmic convexity result used in \cite{GaT'11, YZ'08} works only for self-adjoint operators. There is an extension of this method to some parabolic inequalities (see \cite[Theorem 3.1.3]{Is'17}), but it does not include an inequality of the form
\begin{equation}
|u_t - \Delta u| \le C |\nabla u|, \label{ineq}
\end{equation}
where $C>0$ is a constant. On the other hand, when $\Omega \subset \mathbb{R}^n$ is a bounded domain, Klibanov \cite{KL'06} proved a logarithmic stability estimate for initial data in general parabolic inequalities similar to \eqref{ineq} using a boundary observation. The proof merely relies on Carleman estimates. The case of an unbounded domain $\Omega \subseteq \mathbb{R}^N$ was studied in \cite{KT'07}. Although the strategy in \cite{KL'06, KT'07} is quite general and actually holds for a general second order elliptic operator with $t$-dependent coefficients, it does not cover elliptic operators with unbounded gradient coefficients, since inequality \eqref{ineq} does not hold in this case.

From another viewpoint, the analyticity angle of the semigroup generated by a second order uniformly elliptic operator with bounded coefficients equals $\dfrac{\pi}{2}$ as a perturbation of the principal part which is similar to the Laplacian. Thus, our result allows to widen the applicability of the logarithmic convexity technique to the class of operators generating analytic semigroups of any angle $\psi \in \left(0,\dfrac{\pi}{2}\right]$, which include the case of unbounded drift terms of gradient type. The key idea relies on a general logarithmic convexity result due to Miller \cite{Mi'75}, combined with a developed sharp lower bound for the harmonic function appearing in the logarithmic convexity estimate.

In order to illustrate the applicability of our abstract results to various parabolic systems, we will consider an inverse initial data problem for the autonomous Ornstein–Uhlenbeck equation in $\mathbb{R}^N$, arising in stochastic perturbations of differential equations, quantum field theory and control theory. This equation exhibits most of the peculiarities that arise
with unbounded coefficients in terms of generation and observability questions. These equations have attracted considerable attention \cite{BP'18, CFMP'05, LRM'16, Lu'97, LMP'20, Me'01, MPP'02, MPRS'03}.

In \cite{Lu'97, MPRS'03}, Lunardi, Metafune et al. gave a precise description of the domain of Ornstein–Uhlenbeck operator respectively in $L^2$ and $L^p$ spaces with the invariant measure, while Chill et al. computed the sector of analyticity in \cite{CFMP'05}. Further generation and spectral results were established in \cite{Me'01, MPP'02, MPRS'03}. We refer to \cite{LMP'20} for a recent review on this topic. In contrast to parabolic equations in bounded domains, the observability question for such equations (on the whole space) is still at an earlier stage. Recently, Beauchard and Pravda-Starov \cite{BP'18, BP'17} have shown that the underlying equation is final state observable at any positive time from special observation regions, using an adaptation of the Lebeau-Robbiano strategy. As for inverse problems, to the best of our knowledge, the problem of stability in recovering initial data in parabolic equations with \textit{unbounded coefficients} has not been addressed in the literature.

The article is organized as follows. In Section \ref{sec2}, we start by recalling some preliminaries needed in the sequel. Then, in Section \ref{sec3}, we review some important results on the logarithmic convexity method which we use to prove our main result on the logarithmic stability in inverse initial data problems for final state observable systems. Finally, in Section \ref{sec4}, we apply our abstract result to the Ornstein–Uhlenbeck equation.

\section{Preliminaries \label{sec2}}
In this section, we recall some preliminaries that we will use in the sequel. We start with some facts from the theory of special functions, see \cite{OLBC'10} for more details.\\
\textbf{Incomplete Gamma function:}
the incomplete Gamma function is defined by
\begin{equation}
\Gamma(a,x):=\int_x^{\infty} t^{a-1} \mathrm{e}^{-t}\, \d t, \qquad a>0, \quad x \ge 0. \label{igm1}
\end{equation}
The (complete) Gamma function is given by
\begin{equation}
\Gamma(a)=\Gamma(a,0):=\int_0^{\infty} t^{a-1} \mathrm{e}^{-t}\, \d t, \qquad a>0. \label{gm1}
\end{equation}
In particular we have $\dfrac{\d}{\d x} \Gamma(a,x)=-x^{a-1} \mathrm{e}^{-x}$ for $a>0$ and $x\ge 0$.\\
\textbf{Incomplete Beta function:}
the incomplete Beta function is defined by
\begin{equation}
\mathrm{B}_x(a,b):=\int_0^x t^{a-1} (1-t)^{b-1}\, \d t, \qquad a>0, \quad b>0, \quad 0 \le x\le 1. \label{ibe1}
\end{equation}
The Beta function is simply
\begin{equation}
\mathrm{B}(a,b):=\mathrm{B}_1(a,b)=\int_0^1 t^{a-1} (1-t)^{b-1}\, \d t, \qquad a>0, \quad b>0. \label{be1}
\end{equation}
In particular, we have $\dfrac{\d}{\d x} \mathrm{B}_x(a,b)=x^{a-1} (1-x)^{b-1}$ for $0<x<1$. Moreover, the following identity holds
\begin{equation}
\mathrm{B}(a, 1-a)=\Gamma(a)\Gamma(1-a)=\dfrac{\pi}{\sin(\pi a)},  \qquad 0<a<1. \label{bepro}
\end{equation}
Also, for $0<a\le\frac{1}{2}$, we have
\begin{equation}
a \mathrm{B}_x(a, 1-a) \ge \left(\frac{1}{x}-1\right)^{\frac{1}{2}-a} \arcsin\sqrt{x}, \qquad 0<x \le 1. \label{inba}
\end{equation}

Now, we recall a few generalities from the theory of analytic semigroups. We denote by $\mathcal{L}(H)$ the Banach algebra of all linear bounded operators on $H$. For $\psi \in \left(0,\dfrac{\pi}{2}\right]$, let $\Sigma_\psi$ be the sector
$$\Sigma_\psi:= \{z\in \mathbb{C} \setminus \{0\}\colon |\arg z|< \psi \}.$$
\begin{definition}
Let $\psi \in \left(0,\dfrac{\pi}{2}\right]$. A family of operators $(T(z))_{z \in \Sigma_\psi \cup\{0\}} \subset \mathcal{L}(H)$ is called an analytic semigroup if
\begin{enumerate}[label=(\roman*),leftmargin=*]
\item $T(0)=I$ and $T(z_1+z_2)=T(z_1) T(z_2)$ for all $z_1, z_2 \in \Sigma_\psi$.
\item $\langle T(\cdot)x, y \rangle$ is analytic in $\Sigma_\psi$ for all $x,y \in H$.
\item $\lim\limits_{\Sigma_\varphi \ni z \rightarrow 0} T(z) x=x$ for all $x \in H$ and $\varphi \in (0,\psi)$.
\end{enumerate}
The angle of $(T(z))_{z \in \Sigma_\psi \cup\{0\}}$ is the supremum of possible values of $\psi$.
If, in addition,
\begin{enumerate}[label=(\roman*),leftmargin=*]
\item[(iv)] $\sup_{z\in \Sigma_\varphi} \|T(z)\| < \infty$ for all $\varphi \in (0,\psi),$
\end{enumerate}
we call $(T(z))_{z \in \Sigma_\psi \cup\{0\}}$ a bounded analytic semigroup.
\end{definition}

Let $A \colon D(A)\subset H \rightarrow H$ be the generator of an analytic semigroup $\left(\mathrm{e}^{tA}\right)_{t\ge0}$ of negative type, i.e., there exist $K, \kappa>0$ such that $\|\mathrm{e}^{tA}\| \le K \mathrm{e}^{-\kappa t}$ for all $t\ge 0$.
\begin{definition}
For $\alpha>0$, the bounded linear operator $(-A)^{-\alpha}$ is defined by
$$(-A)^{-\alpha}=\frac{1}{\Gamma(\alpha)} \int_0^\infty t^{\alpha-1} \mathrm{e}^{tA}\, \d t,$$
and by convention $(-A)^{0}=I$. For every $\alpha \ge 0$, the operator $(-A)^{-\alpha}$ is one-to-one. Then we define the closed linear operator $(-A)^{\alpha}=((-A)^{-\alpha})^{-1}$ with domain $D((-A)^{\alpha})=R((-A)^{-\alpha})$ endowed by the norm $\|x\|_{D((-A)^{\alpha})}=\|(-A)^{\alpha} x\|$ for every $x\in D((-A)^{\alpha})$.
\end{definition}
Next we collect some useful properties of $(-A)^\alpha$ that can be found in \cite{Paz'83}.
\begin{itemize}
\item For $\alpha \ge \beta >0$ we have $D((-A)^{\alpha}) \subset D((-A)^{\beta})$.
\item For $\alpha,\beta \in \mathbb{R}$ and $\gamma=\max(\alpha,\beta,\alpha +\beta)$,
\begin{equation}
(-A)^{\alpha +\beta}= (-A)^{\alpha} (-A)^{\beta} \qquad \text{ on } D((-A)^{\gamma}). \label{eqpw1}
\end{equation}
\item For every $t>0$ and $\alpha \ge 0$, $\mathrm{e}^{tA}\colon H \rightarrow D((-A)^{\alpha})$ and commutes with $(-A)^{\alpha}$ on $D((-A)^{\alpha})$. Moreover, the operator $(-A)^{\alpha} \mathrm{e}^{tA}$ is bounded and there exists a constant $M_\alpha>0$ such that
\begin{equation}
\|(-A)^{\alpha} \mathrm{e}^{tA}\| \le \dfrac{M_\alpha}{t^\alpha}. \label{eqpw2}
\end{equation}
\end{itemize}

\begin{remark}
The assumption that the type of $\left(\mathrm{e}^{tA}\right)_{t\ge0}$ be negative is not restrictive since by a scaling we can change $A$ to $A-\kappa$ so that its semigroup is of negative type.
\end{remark}

\section{Logarithmic convexity \label{sec3}}
The logarithmic convexity is one of the well-known methods that had been widely used to prove the conditional stability for improperly posed problems such as backward parabolic equations as well as inverse initial data problems. For the reader convenience, we refer to \cite{AN'63,ACM'21,GaT'11,Is'17,Mi'75,Pa'75,YZ'01}.

\subsection{Self-adjoint operator case}
In this special case, the logarithmic convexity result states that the norm of the solution to \eqref{E1} is logarithmically convex as a function of $t$.
\begin{lemma}
Consider a self-adjoint operator $A$ which is bounded above. Let $\theta >0$ and $M>0$ be fixed. For all $u_0\in H$ such that $\|u_0\|\leq M$, the solution of \eqref{E1} satisfies
\begin{equation}
\|u(t)\| \leq M^{1- \frac{t}{\theta}} \|u(\theta)\|^{\frac{t}{\theta}} \label{eqlc1}
\end{equation}
for all $0\leq t\leq \theta$.
\end{lemma}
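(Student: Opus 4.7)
The plan is to prove the classical statement that $t\mapsto \log \|u(t)\|^2$ is convex on $[0,\theta]$, from which the desired three-point inequality follows by the definition of convexity. First I would reduce to $u_0\neq 0$, since otherwise both sides of \eqref{eqlc1} vanish. Because $A$ is self-adjoint and bounded above, the spectral theorem represents $\mathrm{e}^{tA}$ as multiplication by the (positive) function $\mathrm{e}^{t\lambda}$ in the spectral picture; in particular $\mathrm{e}^{tA}$ is injective, so $f(t):=\|u(t)\|^2>0$ for every $t\in[0,\theta]$ and $\log f$ is well defined.

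Next I would set $f(t):=\|u(t)\|^2=\langle u(t),u(t)\rangle$ and differentiate using $u'(t)=Au(t)$ together with the self-adjointness of $A$. A direct computation gives
\[
f'(t)=2\langle Au(t),u(t)\rangle,\qquad f''(t)=4\|Au(t)\|^2.
\]
The Cauchy--Schwarz inequality applied to $Au(t)$ and $u(t)$ then yields
\[
(f'(t))^2=4\,|\langle Au(t),u(t)\rangle|^2\le 4\|Au(t)\|^2\|u(t)\|^2=f(t)f''(t),
\]
which is precisely the statement $(\log f)''\ge 0$ on $(0,\theta)$. Equivalently, $\log f$ is convex.

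From convexity at the endpoints $0$ and $\theta$ I obtain, for every $t\in[0,\theta]$,
\[
\log f(t)\le \Bigl(1-\tfrac{t}{\theta}\Bigr)\log f(0)+\tfrac{t}{\theta}\log f(\theta),
\]
and exponentiating and taking square roots gives $\|u(t)\|\le \|u_0\|^{1-t/\theta}\|u(\theta)\|^{t/\theta}$. Since $\|u_0\|\le M$ and $1-t/\theta\ge 0$, this is bounded by $M^{1-t/\theta}\|u(\theta)\|^{t/\theta}$, which is \eqref{eqlc1}.

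The only genuine issue is justifying the differentiations for $u_0\in H$ that is not a priori in $D(A)$ or $D(A^2)$. I would handle this by exploiting the fact that $(\mathrm{e}^{tA})_{t\ge 0}$ is an analytic semigroup: for every $t>0$ and every $k\ge 1$ one has $u(t)\in D(A^k)$, so the formulas for $f'$ and $f''$ are valid on $(0,\theta]$, and the convexity inequality then extends to the closed interval $[0,\theta]$ by the continuity of $f$ up to $t=0$ (which follows from the strong continuity of $\mathrm{e}^{tA}$ at $0$). This is the mildly technical step but it is standard; the conceptual core of the proof is just the Cauchy--Schwarz computation above.
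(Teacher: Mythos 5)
Your proof is correct and follows exactly the route the paper indicates for this lemma (the paper only sketches it, citing \cite{GaT'11}): differentiate $\log\|u(t)\|^2$ twice, use self-adjointness to get $f'=2\langle Au,u\rangle$ and $f''=4\|Au\|^2$, and conclude log-convexity from Cauchy--Schwarz. Your additional care about positivity of $\|u(t)\|$ and the regularity of $u(t)$ for $t>0$ via the analytic smoothing of the semigroup generated by a self-adjoint operator bounded above is exactly the standard way to make the sketch rigorous.
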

A simple proof of this result relies on differentiating $\log \|u(t)\|$ twice with respect to $t$, with use of self-adjointness and Cauchy-Schwarz inequality, see, e.g., \cite{GaT'11}.

\subsection{General case: Analytic semigroup}
A general extension of logarithmic convexity inequality \eqref{eqlc1} for analytic semigroups (even in Banach spaces) was established in \cite{Mi'75}.

Assume that $A$ is the generator of a bounded analytic semigroup $\left(\mathrm{e}^{tA}\right)_{t\ge0}$ of angle $\psi \in \left(0,\dfrac{\pi}{2}\right]$ in $H$. In particular, if we set $\Sigma_\psi:= \{z\in \mathbb{C} \setminus \{0\}\colon |\arg z|< \psi \}$, then there exist $K\geq 1$ and $\kappa \ge 0$ such that
\begin{equation}
\|\mathrm{e}^{zA}\| \leq K \mathrm{e}^{\kappa \mathrm{Re}\,z} \quad \text{ on } \overline{\Sigma}_\psi. \label{A2}
\end{equation}

We shall use the following result from \cite{Mi'75}.
\begin{theorem}
Let $A$ be the generator of an analytic semigroup of angle $\psi$ with corresponding $K\geq 1$ and $\kappa \ge 0$ in \eqref{A2}. Let $\theta >0$ be fixed, $u_0\in H$ such that $\|u_0\|\leq M$, for some positive constant $M$, and $u$ be the solution of \eqref{E1}. Then
\begin{equation}
\|u(t)\| \leq K \mathrm{e}^{\kappa(t-\theta w(t))} M^{1- w(t)} \|u(\theta)\|^{w(t)}, \qquad 0\leq t\leq \theta, \label{eqlc2}
\end{equation}
where $w$ is the harmonic function on the strip
$$\mathcal{S}_\psi:= \{z\in \mathbb{C}\setminus \{0\} \colon |\arg z|< \psi \text{ and } |\arg (z-\theta)|> \psi\},$$
which is bounded and continuous on $\overline{\mathcal{S}}_\psi$, and takes the values $0$ and $1$ respectively on the left and right hand boundary half-lines of $\mathcal{S}_\psi$ (see Figure \ref{fig2}).
\end{theorem}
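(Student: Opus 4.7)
The plan is to apply a Phragmén--Lindelöf / three-line argument on the strip $\mathcal{S}_\psi$, regarding the $H$-valued analytic map $f(z) := \mathrm{e}^{zA} u_0$ defined on $\overline{\Sigma}_\psi$. A standard fact is that for any $H$-valued holomorphic function $f$, the scalar function $z \mapsto \log \|f(z)\|$ is subharmonic on its domain of analyticity. The boundary of $\mathcal{S}_\psi$ splits into a \emph{left} part, the two half-lines from $0$ of angle $\pm \psi$, and a \emph{right} part, the two half-lines from $\theta$ of angle $\pm \psi$. On the left boundary, the hypothesis \eqref{A2} together with $\|u_0\|\le M$ gives $\|f(z)\|\le K\mathrm{e}^{\kappa \mathrm{Re}\,z} M$. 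On the right boundary, writing $f(z) = \mathrm{e}^{(z-\theta)A}\mathrm{e}^{\theta A}u_0$ with $z-\theta \in \overline{\Sigma}_\psi$, the same estimate yields $\|f(z)\|\le K\mathrm{e}^{\kappa(\mathrm{Re}\,z - \theta)}\|u(\theta)\|$.

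Next I would introduce the auxiliary function
$$v(z) := \log \|f(z)\| - \kappa\, \mathrm{Re}\,z - \log K - (1 - w(z))\log M - w(z)\bigl(\log \|u(\theta)\| - \kappa \theta\bigr),$$
noting that $\kappa \mathrm{Re}\,z$ is harmonic and that the entire correction subtracted from $\log \|f(z)\|$ is harmonic on $\mathcal{S}_\psi$ since $w$ is. Hence $v$ is subharmonic on $\mathcal{S}_\psi$. Using the prescribed boundary values $w=0$ on the left and $w=1$ on the right, the two bounds of the previous paragraph give $v \le 0$ on $\partial \mathcal{S}_\psi$. The maximum principle for subharmonic functions then yields $v\le 0$ on $\mathcal{S}_\psi$, and specializing to $z=t\in[0,\theta]$ and exponentiating delivers \eqref{eqlc2}.

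The main obstacle is that $\mathcal{S}_\psi$ is unbounded, so the ordinary maximum principle is not enough; one needs a Phragmén--Lindelöf version that requires a growth restriction on $v$ as $|z|\to\infty$ within $\mathcal{S}_\psi$. Since $w$ is bounded and $f$ is bounded on compact subsets, the issue reduces to controlling $\log \|f(z)\|$ on the far ends of the strip, which I would handle by combining \eqref{A2} with an interpolation argument on slightly narrower sectors and passing to the limit. A secondary technical point is the existence and uniqueness of the bounded harmonic function $w$ with the prescribed boundary values; this is the harmonic measure of the right boundary of $\mathcal{S}_\psi$, most cleanly produced by conformally mapping $\mathcal{S}_\psi$ onto a half-plane and solving a Dirichlet problem there. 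A final minor nuisance is that $f(z)$ could vanish; this is absorbed by noting that subharmonic functions are permitted to take the value $-\infty$, or by the standard regularization $\log(\|f\|+\varepsilon)$ followed by $\varepsilon\to 0$.
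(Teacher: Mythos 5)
Your argument is correct and is essentially the Carleman two-constants/harmonic-measure proof that the paper itself does not reproduce but delegates to Miller \cite{Mi'75}: subharmonicity of $\log\|\mathrm{e}^{zA}u_0\|$, the boundary bounds $K\mathrm{e}^{\kappa\mathrm{Re}\,z}M$ on the left rays and $K\mathrm{e}^{\kappa(\mathrm{Re}\,z-\theta)}\|u(\theta)\|$ on the right rays (via $\mathrm{e}^{zA}=\mathrm{e}^{(z-\theta)A}\mathrm{e}^{\theta A}$), and the maximum principle against the harmonic function $w$. The one obstacle you flag, controlling $v$ at infinity, needs no interpolation on narrower sectors: since \eqref{A2} holds on all of $\overline{\Sigma}_\psi\supset\overline{\mathcal{S}}_\psi$ and $w$ is bounded, $v$ is bounded above on the whole strip, so the extended maximum principle for bounded-above subharmonic functions (with $\infty$ a polar boundary point) applies directly.
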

For the existence and uniqueness of such function $w$ we refer to \cite{Ke'67}. The proof of the above theorem follows from a basic inequality for analytic functions due to Carleman \cite{Ca'26}.

We start by the simple case when the strip $\mathcal{S}_\psi$ is rectangular.\\
\textbf{Special case:} $\psi=\dfrac{\pi}{2}$. In this case, $\mathcal{S}_{\frac{\pi}{2}}= \{z=t+\mathrm{i}s \colon 0<t< \theta \}$ and
\begin{equation}
w(z)=\dfrac{t}{\theta} \qquad \text{ for all } z=t+\mathrm{i}s\in \mathcal{S}_{\frac{\pi}{2}}. \label{eqw1}
\end{equation}

\begin{figure}[H]
\centering
\includegraphics{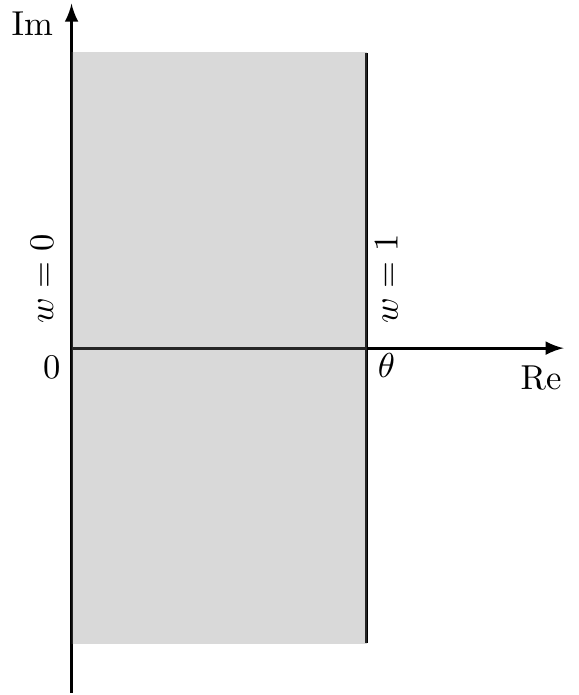}
\caption{The strip $\mathcal{S}_{\frac{\pi}{2}}$.}
\end{figure}

Consequently, we have the following result.
\begin{corollary}\label{corA4}
Let $\theta >0$ and $M>0$ be fixed and let $u_0\in H$ be such that $\|u_0\|\leq M$. If $A$ generates an analytic semigroup of angle $\dfrac{\pi}{2}$, then the solution of \eqref{E1} satisfies
\begin{equation}
\|u(t)\| \leq K M^{1- \frac{t}{\theta}} \|u(\theta)\|^{\frac{t}{\theta}}
\end{equation}
for all $0\leq t\leq \theta$.
\end{corollary}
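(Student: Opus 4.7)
The plan is to apply the Miller-type theorem (inequality \eqref{eqlc2}) by specializing it to the case $\psi = \dfrac{\pi}{2}$, where the harmonic function $w$ has the explicit form \eqref{eqw1}. The argument has essentially three short steps and no substantive analytic obstacle.

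First, I would identify the geometry of the strip. When $\psi = \dfrac{\pi}{2}$, the sector $\Sigma_{\pi/2}$ is the open right half-plane, so the two defining conditions of $\mathcal{S}_\psi$ collapse to $\mathrm{Re}\,z > 0$ together with $\mathrm{Re}(z-\theta) < 0$. Hence $\mathcal{S}_{\pi/2}$ degenerates to the rectangular vertical strip $\{z = t + \mathrm{i}s : 0 < t < \theta,\ s \in \mathbb{R}\}$, with left boundary half-line $\{\mathrm{Re}\,z = 0\}$ and right boundary half-line $\{\mathrm{Re}\,z = \theta\}$.

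Second, I would verify that $w(z) = \mathrm{Re}(z)/\theta$ is exactly the harmonic function prescribed by the theorem: it is the real part of the entire function $z \mapsto z/\theta$, hence harmonic; it is bounded (values in $[0,1]$) and continuous up to the closure of the strip; and it takes the required boundary values, namely $0$ on the imaginary axis and $1$ on $\{\mathrm{Re}\,z = \theta\}$. By the uniqueness result recalled in \cite{Ke'67}, this is the $w$ that enters \eqref{eqlc2}, which is consistent with \eqref{eqw1}.

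Finally, restricting to real $z = t \in [0, \theta]$ gives $w(t) = t/\theta$, so that $t - \theta w(t) = 0$ and the exponential correction factor in \eqref{eqlc2} reduces to $1$. Substituting into \eqref{eqlc2} yields
\begin{equation*}
\|u(t)\| \leq K\, M^{1 - t/\theta}\, \|u(\theta)\|^{t/\theta},
\end{equation*}
which is the stated inequality. The cancellation $t - \theta w(t) = 0$ is particular to the rectangular strip; for $\psi < \dfrac{\pi}{2}$, the strip becomes a non-rectangular wedge and $w$ is no longer linear along the real axis, which is precisely why the sharper harmonic-measure estimates developed later in the paper become necessary.
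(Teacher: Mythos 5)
Your proposal is correct and follows the same route as the paper: identify $\mathcal{S}_{\pi/2}$ as the rectangular strip $\{0<\mathrm{Re}\,z<\theta\}$, recognize $w(z)=\mathrm{Re}(z)/\theta$ as the unique bounded harmonic function with the prescribed boundary values, and substitute into \eqref{eqlc2}, where the cancellation $t-\theta w(t)=0$ removes the exponential factor. The paper presents this more tersely (stating \eqref{eqw1} and deducing the corollary), but the content is identical.
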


\begin{remark}
This particular result allows us to weaken the regularity assumption on initial data. It was applied to a non-self adjoint case in \cite{ACM'21} where an inverse problem of potentials and initial data was considered. A related inverse source problem was treated in \cite{ACMO'20}.
\end{remark}

\noindent\textbf{General case:} $\psi \in \left(0,\dfrac{\pi}{2}\right]$. Denote by $\mathcal{S}_\psi^+$ the upper part of $\mathcal{S}_\psi$. That is, $$\mathcal{S}_\psi^+ :=\{z\in \mathcal{S}_\psi \colon \mathrm{Im}\, z >0\}.$$

\begin{figure}[H]
\centering
\includegraphics{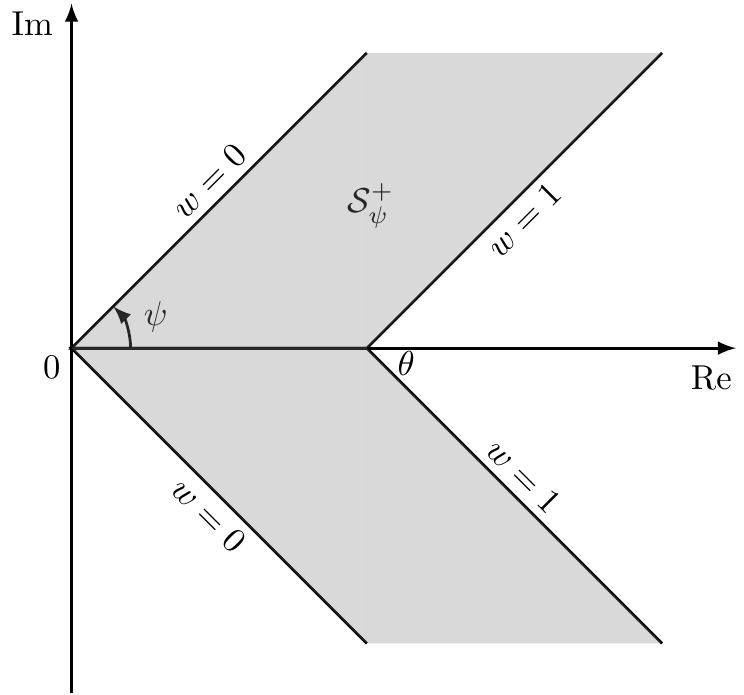}
\caption{The strip $\mathcal{S}_\psi$.} \label{fig2}
\end{figure}

\begin{lemma}
The harmonic function $w$ satisfies the following inequality
\begin{equation}
w(t)\ge \frac{2}{\pi} \left(\frac{\psi}{\sin\psi}\right)^{\frac{\pi}{2\psi}} \left(\frac{t}{\theta}\right)^{\frac{\pi}{2\psi}}, \qquad 0<t\le \theta.
\label{estw1}
\end{equation}
\end{lemma}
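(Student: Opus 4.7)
The idea is to reduce the estimate for $w(t)$ to a concrete formula via two successive conformal mappings and then bound the resulting expression by an elementary inequality on the incomplete Beta function. Throughout set $a = \psi/\pi \in (0,1/2]$.

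\emph{Step 1: explicit formula for $w$ on $(0,\theta)$.} First I would conformally map the upper half-plane $\mathbb{H}_\zeta$ onto the half-strip $\mathcal{S}_\psi^+$, viewed as a generalized triangle with vertices $0,\theta,\infty$ and interior angles $\psi,\pi-\psi,0$. The Schwarz--Christoffel formula gives $z = f(\zeta) = C\int_0^\zeta \eta^{a-1}(\eta-1)^{-a}\,\d\eta$, and the normalization $f(1)=\theta$ together with \eqref{bepro} produces the boundary correspondence
\[
\frac{t}{\theta} = \frac{\mathrm{B}_{\zeta(t)}(a,1-a)}{\mathrm{B}(a,1-a)}, \qquad \zeta(t)\in(0,1),\quad t\in(0,\theta).
\]
Since $\mathcal{S}_\psi$ is symmetric across $\mathbb{R}$, $w$ extends harmonically through the segment $(0,\theta)$ with a homogeneous Neumann condition there, so its pullback to $\mathbb{H}_\zeta$ satisfies Dirichlet data $0$ on $(-\infty,0)$, $1$ on $(1,\infty)$, and Neumann on $(0,1)$. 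A second conformal map $\alpha = \arcsin\sqrt{\zeta}$ sends $\mathbb{H}_\zeta$ biholomorphically onto the semi-infinite rectangle $R := \{0<\Re\alpha<\pi/2,\;\Im\alpha>0\}$ and carries these boundary data to $0$ on the left side, $1$ on the right side, and Neumann on the bottom. The unique bounded harmonic function satisfying these conditions is $(2/\pi)\Re\alpha$; evaluating for $\zeta\in(0,1)$ (where $\alpha$ is real) yields
\[
w(t) = \frac{2}{\pi}\arcsin\sqrt{\zeta(t)}.
\]

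\emph{Step 2: elementary estimate on $\mathrm{B}_\zeta$.} Set $A := \arcsin\sqrt{\zeta(t)}$. The substitution $\eta = \sin^2\phi$ in \eqref{ibe1} recasts the incomplete Beta integral as
\[
\mathrm{B}_\zeta(a,1-a) = 2\int_0^A \tan^{2a-1}\phi\,\d\phi.
\]
Because $a\le 1/2$ we have $2a-1\le 0$, and the elementary inequality $\tan\phi\ge\phi$ on $[0,\pi/2)$ therefore gives $\tan^{2a-1}\phi\le\phi^{2a-1}$. Integrating yields $a\,\mathrm{B}_\zeta(a,1-a)\le A^{2a}$. Substituting the value from Step~1, $\mathrm{B}_\zeta(a,1-a) = (t/\theta)\pi/\sin\psi$, and using $a\pi = \psi$, this becomes $A^{2a} \ge (\psi/\sin\psi)(t/\theta)$. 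Taking the $(2a)$-th root and multiplying by $2/\pi$ gives exactly \eqref{estw1}.

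\emph{Main obstacle.} The delicate part is Step~1: correctly identifying the interior angles of $\mathcal{S}_\psi^+$, handling the branches in the Schwarz--Christoffel integrand (especially the factor $(\eta-1)^{-a}$ on $(0,1)$), exploiting the symmetry of $\mathcal{S}_\psi$ to obtain the mixed Dirichlet/Neumann problem on $\mathbb{H}_\zeta$, and verifying that $\alpha = \arcsin\sqrt{\zeta}$ realizes the required second conformal map onto $R$ with the correct boundary correspondence. Once the explicit formula for $w$ is in hand, the estimate in Step~2 is immediate.
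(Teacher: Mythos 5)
Your proposal is correct, and it follows the same overall strategy as the paper (Schwarz--Christoffel map onto $\mathcal{S}_\psi^+$, reduction to the incomplete Beta function), but the two halves are executed differently, and your second half is genuinely more elementary. For the explicit formula, the paper composes $f$ with the map $g(z)=\theta\sin^2(\pi z/(2\theta))$ from the straight half-strip and transports the known solution $w=\mathrm{Re}(z)/\theta$, obtaining $w(t)=h^{-1}(t)/\theta$ with $h=f\circ g$; your route via the mixed Dirichlet--Neumann problem on the half-plane and the map $\arcsin\sqrt{\zeta}$ yields the identical formula $w(t)=\tfrac{2}{\pi}\arcsin\sqrt{\zeta(t)}$, so this is only a repackaging. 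The real divergence is in the estimate: the paper proves that $k(x)=h_0(x)/x^{2\varphi}$ is non-increasing --- which requires the \emph{lower} bound \eqref{inba} on the incomplete Beta function, quoted from the special-function literature --- and then evaluates $\lim_{x\to 0^+}k(x)$ by L'H\^opital to reach \eqref{esth1}. You instead prove the bound $a\,\mathrm{B}_\zeta(a,1-a)\le(\arcsin\sqrt{\zeta})^{2a}$ (which is precisely \eqref{esth1} after rescaling) in one line from the substitution $\eta=\sin^2\phi$ and $\tan\phi\ge\phi$ with $2a-1\le 0$. This is self-contained, avoids the citation of \eqref{inba} and the L'H\^opital computation, and makes the sharpness at $\psi=\pi/2$ (where $2a-1=0$ forces equality) transparent. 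Both arguments are valid; yours buys a shorter, more elementary derivation of the key inequality, while the paper's monotonicity statement for $k$ is slightly stronger information than the single endpoint bound you use.
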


\begin{proof}
First, we explicit the harmonic function $w(t)$ for $0 \le t \le \theta$. For this aim, it suffices to obtain a conformal mapping from $\mathcal{S}_{\frac{\pi}{2}}^+$ onto $\mathcal{S}_\psi^+$ (by Schwarz reflection principle, see \cite[Chap. 2, \S 5.4]{SS'03}). The complex analytic function $g(z)=\theta \sin^2\left(\dfrac{\pi z}{2\theta}\right)$ is a conformal mapping from $\mathcal{S}_{\frac{\pi}{2}}^+$ onto the upper half-plane $\mathbb{C}^+ :=\{z\in \mathbb{C} \colon \mathrm{Im}\, z>0\}$ with $g(0)=0, g(\theta)=\theta$ and $g(\infty)=\infty$. A conformal mapping from $\mathbb{C}^+$ onto $\mathcal{S}_\psi^+$ is given by the Schwarz-Christoffel formula (see \cite[Chap. 8, \S 4]{SS'03}) with a change of variable as follows:
$$f(z)=C \theta\int_0^{\frac{z}{\theta}} \xi^{\frac{\psi}{\pi}-1} (1-\xi)^{-\frac{\psi}{\pi}} \, \d \xi,$$
where $C$ is a constant so that $f(\theta)=\theta$, that is
\begin{align*}
C &=\frac{1}{\mathrm{B}\left(\frac{\psi}{\pi}, 1-\frac{\psi}{\pi}\right)}=\frac{\sin\psi}{\pi},
\end{align*}
where we used the identity \eqref{bepro}. Hence, we obtain that
$$f(z)=\frac{\theta \sin\psi}{\pi} \int_0^{\frac{z}{\theta}} \xi^{\frac{\psi}{\pi}-1} (1-\xi)^{-\frac{\psi}{\pi}} \, \d \xi.$$
Hence, $h(z)=f\circ g(z)$ maps $\mathcal{S}_{\frac{\pi}{2}}^+$ conformally onto $\mathcal{S}_\psi^+$, and then the harmonic function on $\mathcal{S}_\psi^+$ is given by
\begin{equation}
w(z)=\dfrac{\mathrm{Re}(h^{-1}(z))}{\theta}. \label{eqw2}
\end{equation}
Set $\varphi=\dfrac{\psi}{\pi}$ and $h_0(x)=\mathrm{B}_{\sin^2(\frac{\pi x}{2\theta})}(\varphi, 1-\varphi)$, $0<x \le\theta$. The function $k(x)=\dfrac{h_0(x)}{x^{2\varphi}}$ is non-increasing. In fact, $k'(x)=\dfrac{x^{2 \varphi-1} (x h_0'(x)-2 \varphi h_0(x))}{x^{4\varphi}}$ for $0<x \le\theta$. Moreover, 
\begin{align*}
& x h_0'(x)-2 \varphi h_0(x)=\frac{\pi x}{\theta} \cot\left(\frac{\pi x}{2\theta}\right)^{1-2\varphi}-2\varphi \mathrm{B}_{\sin^2(\frac{\pi x}{2\theta})}(\varphi, 1-\varphi) \le 0, &&\quad 0<x\le\theta,\\
& \Longleftrightarrow \varphi \mathrm{B}_{\sin^2 t}(\varphi, 1-\varphi) - t\cot^{1-2\varphi}(t) \ge 0, &&\quad 0<t\le \frac{\pi}{2},\\
& \Longleftrightarrow \varphi \mathrm{B}_t(\varphi, 1-\varphi) - \left(\frac{1}{t}-1\right)^{\frac{1}{2}-\varphi} \arcsin\sqrt{t} \ge 0, &&\quad 0<t \le 1,
\end{align*}
which holds by the inequality \eqref{inba}. Using Hospital's rule we obtain
\begin{align*}
k(x) &\le \lim_{x \to 0^+} \frac{h_0(x)}{x^{2\varphi}}= \lim_{x \to 0^+} \frac{\mathrm{B}_{\sin^2(\frac{\pi x}{2\theta})}(\varphi, 1-\varphi)}{x^{2\varphi}}\\
&= \lim_{t \to 0^+} \frac{\pi^{2\varphi}}{4^\varphi} \theta^{-2\varphi}\frac{\mathrm{B}_{\sin^2 t}(\varphi, 1-\varphi)}{t^{2\varphi}}\\
&= \lim_{t \to 0^+} \frac{1}{\varphi} \frac{\pi^{2\varphi}}{4^\varphi} \theta^{-2\varphi} (t\cot t)^{1-2\varphi}\\
&= \frac{1}{\varphi}\frac{\pi^{2\varphi}}{4^\varphi} \theta^{-2\varphi}.
\end{align*}
Since $h(x)=\dfrac{\theta \sin \psi}{\pi} h_0(x) $, we infer that
\begin{equation}
h(x)\le \theta^{1-2\varphi} \dfrac{\sin\psi}{\psi} \dfrac{\pi^{2\varphi}}{4^\varphi} x^{2\varphi}, \qquad 0<x \le\theta. \label{esth1}
\end{equation}
The previous calculation implies that $h$ is increasing and then $h^{-1}((0,\theta])=(0,\theta]$. Consequently, we obtain that $h^{-1}(t) \ge \dfrac{2}{\pi} \left(\dfrac{\psi}{\sin\psi}\right)^{\frac{1}{2\varphi}} \theta^{1-\frac{1}{2\varphi}} t^{\frac{1}{2\varphi}}$ for all $t\in (0, \theta]$. Finally,
$$w(t)\ge \frac{2}{\pi} \left(\frac{\psi}{\sin\psi}\right)^{\frac{\pi}{2\psi}} \left(\frac{t}{\theta}\right)^{\frac{\pi}{2\psi}}$$
for all $t\in (0, \theta]$.
\end{proof}

\begin{remark}
When $\psi=\dfrac{\pi}{2}$ in the above proof, the function $g(z)=\theta \sin^2\left(\dfrac{\pi z}{2\theta}\right)$ is exactly the inverse function of $f(z)=\dfrac{\theta}{\pi} \displaystyle\int_0^{\frac{z}{\theta}} \xi^{-\frac{1}{2}} (1-\xi)^{-\frac{1}{2}} \, \d \xi=\dfrac{2 \theta}{\pi} \arcsin \sqrt{\dfrac{z}{\theta}}$ (with principal values). Then, we obtain that $h(z)=f\circ g(z)=z$ for all $z\in \mathcal{S}_{\frac{\pi}{2}}^+$. Substituting in \eqref{eqw2} we find again equality \eqref{eqw1}.
\end{remark}

Let us set the following two constants depending on the angle of analyticity of $\left(\mathrm{e}^{tA}\right)_{t\ge0}$,
\begin{equation}
\phi=\dfrac{\pi}{2\psi} \qquad  \text{ and } \qquad c_\psi=\dfrac{2}{\pi} \left(\dfrac{\psi}{\sin\psi}\right)^{\frac{\pi}{2\psi}}.
\end{equation}

\begin{remark}
The constant $c_\psi$ in inequality \eqref{estw1} is sharp, since for $\psi=\dfrac{\pi}{2}$, i.e., $\phi=c_\psi=1$, we obtain $w(t)=\dfrac{t}{\theta}$.
\end{remark}

Now, we are ready to state our main result on logarithmic stability estimate for the inverse initial data problem.
\begin{theorem}\label{logstab}
We assume that $u_{0} \in \mathcal{I}_{\epsilon, M}$ and the problem \eqref{E1} is final state observable in time $\theta$. Assume also that $p \in \left(1,\dfrac{1}{1-\epsilon}\right)$ and $s \in \left(0,1-\dfrac{1}{p}\right)$. Then, there exists a positive constant $K_1(M,\epsilon,K,\kappa,\kappa_{\mathrm{obs}},\theta,p,s)$ such that
\begin{equation}
\left\|u_0\right\|_H \le K_1 \left(\frac{\Gamma\left(\frac{1}{\phi}\right)-\Gamma\left(\frac{1}{\phi}, -c_\psi p\log \|\mathbf{C}u\|_{L^2(0,\theta;Y)}\right)}{\left(-c_\psi p\log \|\mathbf{C}u\|_{L^2(0,\theta;Y)}\right)^{\frac{1}{\phi}}\phi}\right)^{\frac{s}{p}}. \label{stab1}
\end{equation}
In particular,
\begin{equation}
\left\|u_0\right\|_H \le K_1 \left(\frac{\Gamma\left(\frac{1}{\phi}\right)}{\left(-c_\psi p\log \|\mathbf{C}u\|_{L^2(0,\theta;Y)}\right)^{\frac{1}{\phi}}\phi}\right)^{\frac{s}{p}}, \label{stab2}
\end{equation}
provided that $\|\mathbf{C}u\|_{L^2(0,\theta;Y)}$ is sufficiently small.
\end{theorem}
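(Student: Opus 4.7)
The plan is to combine the logarithmic convexity estimate \eqref{eqlc2} with the lower bound \eqref{estw1} on $w$ to obtain a decaying pointwise-in-$t$ bound on $\|u(t)\|_H$, to convert this into an $L^{p}(0,\theta;H)$ bound on $u$ whose integrand is precisely the one generating the incomplete Gamma expression in \eqref{stab1}, and finally to descend from the $L^{p}$-in-time bound to a bound on $u_{0}$ itself by an interpolation argument exploiting $u_{0}\in\mathcal{I}_{\epsilon,M}$.

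First I would set $\eta:=\kappa_{\mathrm{obs}}\|\mathbf{C}u\|_{L^{2}(0,\theta;Y)}$, so that \eqref{obs1} gives $\|u(\theta)\|_{H}\le\eta$. Since $(-A)^{-\epsilon}$ is bounded, one has $\|u_{0}\|_{H}\le M':=\|(-A)^{-\epsilon}\|\,M$; for $\eta<M'$ the factor $(\eta/M')^{w(t)}$ is decreasing in $w(t)$, so feeding the observability bound into \eqref{eqlc2} and applying \eqref{estw1} yields
\[
\|u(t)\|_{H}\le KM'\mathrm{e}^{\kappa\theta}\exp\!\bigl(-c_{\psi}(t/\theta)^{\phi}\log(M'/\eta)\bigr),\qquad 0<t\le\theta.
\]
Raising to the $p$-th power, integrating over $(0,\theta)$, and performing the substitution $\tau=pc_{\psi}(t/\theta)^{\phi}\log(M'/\eta)$ turns the integral into a scaled lower incomplete Gamma function and produces exactly a constant multiple of the bracket appearing in \eqref{stab1}, bounding $\|u\|_{L^{p}(0,\theta;H)}^{p}$.

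Next, to descend from this $L^{p}$-in-time bound to $\|u_{0}\|_{H}$, I would use the representation $(-A)^{-\alpha}u_{0}=\frac{1}{\Gamma(\alpha)}\int_{0}^{\infty}t^{\alpha-1}\mathrm{e}^{tA}u_{0}\,\d t$ for a parameter $\alpha>1/p$. Splitting the integral at $t=\theta$, applying Hölder's inequality on $(0,\theta)$ (whose finiteness requires $\alpha>1/p$, equivalently $t^{\alpha-1}\in L^{p'}(0,\theta)$), and using the exponential decay of $\mathrm{e}^{tA}$ together with $\|u(\theta)\|_{H}\le\eta$ on $(\theta,\infty)$, I obtain $\|u_{0}\|_{D((-A)^{-\alpha})}\le C_{1}\|u\|_{L^{p}(0,\theta;H)}+C_{2}\|\mathbf{C}u\|_{L^{2}(0,\theta;Y)}$. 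I would then invoke the moment inequality for fractional powers of the sectorial operator $-A$ with exponents $-\alpha<0<\epsilon$, namely $\|u_{0}\|_{H}\le C\,\|u_{0}\|_{D((-A)^{-\alpha})}^{\epsilon/(\epsilon+\alpha)}\|u_{0}\|_{D((-A)^{\epsilon})}^{\alpha/(\epsilon+\alpha)}$, and choose $\alpha=\epsilon(1-s)/s$ so that $\epsilon/(\epsilon+\alpha)=s$, arriving at $\|u_{0}\|_{H}\le C\bigl(\|u\|_{L^{p}}+\|\mathbf{C}u\|_{L^{2}}\bigr)^{s}M^{1-s}$. The parameter constraints match: $p<\tfrac{1}{1-\epsilon}$ is elementary-equivalent to $1-1/p<p\epsilon/(1+p\epsilon)$, so the hypothesis $s\in(0,1-1/p)$ forces $s<p\epsilon/(1+p\epsilon)$, which is precisely $\alpha>1/p$. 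Inserting the $L^{p}$ bound from the second step and absorbing the exponentially smaller $\|\mathbf{C}u\|_{L^{2}}^{s}$ term into $K_{1}$ for $\|\mathbf{C}u\|_{L^{2}}$ sufficiently small delivers \eqref{stab1}; the simpler form \eqref{stab2} then follows since $\Gamma(1/\phi,x)\to 0$ as $x\to\infty$.

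The main obstacle I anticipate is a clean invocation of the moment inequality for negative fractional powers of $-A$: while it reduces to the classical Komatsu-type inequality for sectorial operators via the substitution $y=(-A)^{-\alpha}x$, for a general analytic semigroup generator of angle $\psi$ one may have to verify the required Balakrishnan representation and the associated constants by hand rather than cite a standard reference. A secondary difficulty is constant bookkeeping, namely ensuring that the various smallness thresholds on $\|\mathbf{C}u\|_{L^{2}}$ used to split off lower-order terms can be merged into a single explicit threshold without spoiling the logarithmic form of \eqref{stab1}.
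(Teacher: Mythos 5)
Your proposal is correct, and its first half (the logarithmic convexity bound \eqref{eqlc2}, the lower bound \eqref{estw1} on $w$, and the substitution turning $\int_0^\theta\|u(t)\|_H^p\,\d t$ into the incomplete Gamma expression) coincides with the paper's argument. The second half is genuinely different. The paper bounds $\|u'(t)\|_H\le M_\epsilon t^{\epsilon-1}\|u_0\|_{D((-A)^\epsilon)}$ to obtain a $W^{1,p}(0,\theta;H)$ estimate, interpolates it against the $L^p$ estimate to land in $W^{1-s,p}(0,\theta;H)$, and recovers $\|u_0\|_H=\|u(0)\|_H$ from the embedding $W^{1-s,p}(0,\theta;H)\subset C([0,\theta];H)$, valid precisely for $s<1-\frac{1}{p}$. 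You instead stay entirely in the fractional power scale of $-A$: the Balakrishnan integral for $(-A)^{-\alpha}u_0$, split at $t=\theta$ and estimated by H\"older on $(0,\theta)$ (requiring $\alpha>1/p$) together with the exponential decay of the semigroup and the observability inequality on $(\theta,\infty)$, gives $\|(-A)^{-\alpha}u_0\|_H\lesssim\|u\|_{L^p(0,\theta;H)}+\|\mathbf{C}u\|_{L^2(0,\theta;Y)}$, and the moment inequality between the exponents $-\alpha$, $0$, $\epsilon$ with $\alpha=\epsilon(1-s)/s$ then yields $\|u_0\|_H\lesssim M^{1-s}\bigl(\|u\|_{L^p}+\|\mathbf{C}u\|_{L^2}\bigr)^{s}$. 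Your check that $\alpha>1/p$ is forced by $s<1-\frac{1}{p}$ and $p<\frac{1}{1-\epsilon}$ is correct and shows the two routes impose identical hypotheses, a reassuring consistency. As to what each buys: the paper needs vector-valued Sobolev--Slobodeckij interpolation and the embedding into $C([0,\theta];H)$; you need the moment inequality for fractional powers of a sectorial operator with one negative exponent (standard, and reducible as you note to the positive-exponent case via $y=(-A)^{-\alpha}u_0$ and \eqref{eqpw1}), plus absorption of the tail term $\|\mathbf{C}u\|_{L^2}^{s}$, which decays much faster than the logarithmic main term and is harmless under the smallness assumption already needed to ensure $0<E<1$. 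The one step you gloss over that the paper makes explicit is trading $-\log(\sigma\|\mathbf{C}u\|)$ for $-\log\|\mathbf{C}u\|$ inside the incomplete Gamma bracket; the paper invokes monotonicity of the ratio $r$, and you would need the same one-line observation (boundedness of that ratio for small data suffices).
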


\begin{remark}
In the case when $\psi=\dfrac{\pi}{2}$, i.e., $\phi=c_{\psi}=1$, the stability estimate \eqref{stab1} becomes
$$\left\|u_0\right\|_H \le K_1 \left(\frac{\|\mathbf{C}u\|_{L^2(0,\theta;Y)}^p -1}{\log \|\mathbf{C}u\|_{L^2(0,\theta;Y)}}\right)^{\frac{s}{p}},$$
which is the same inequality obtained in \cite{GaT'11} for self-adjoint dissipative operators.
\end{remark}

\begin{proof}[Proof of Theorem \ref{logstab}]
We will adopt the ideas in \cite{GaT'11, YZ'08} to our case. Using inequality \eqref{eqlc2}, we have
$$\|u(t)\|_H \le K_0 \kappa_M \left(\frac{\|u(\theta)\|_H}{\kappa_M \mathrm{e}^{\kappa \theta}}\right)^{w(t)},$$
where $K_0=K \mathrm{e}^{\kappa \theta}$ and $\kappa_M>0$ denotes a constant which depends on $M$ (it might vary from line to line). Let $p>1$ and set $E=\left(\dfrac{\|u(\theta)\|_H}{\kappa_M \mathrm{e}^{\kappa \theta}}\right)^p$. Since $\|\mathbf{C}u\|_{L^2(0,\theta;Y)}$ is sufficiently small, we can assume that $0<E<1$. By inequality \eqref{estw1} and using a change of variable, we infer that
\begin{align*} \int_0^{\theta} \|u(t)\|_H^{p} \,\d t & \le (K_0 \kappa_M)^p \int_0^{\theta} E^{c_\psi (\frac{t}{\theta})^{\phi}}\, \d t\\
&= (K_0 \kappa_M)^p \theta \int_0^1 E^{c_\psi t^{\phi}}\, \d t\\
&= (K_0 \kappa_M)^p \theta \frac{\Gamma(\frac{1}{\phi})-\Gamma(\frac{1}{\phi}, -c_\psi\log E)}{(-c_\psi\log E)^{\frac{1}{\phi}}\phi}.
\end{align*}
Therefore,
\begin{align}
\|u\|_{L^p\left(0, \theta ; H\right)} \le K_0 \kappa_M\theta^{\frac{1}{p}}\left(\frac{\Gamma(\frac{1}{\phi})-\Gamma(\frac{1}{\phi}, -c_\psi\log E)}{(-c_\psi\log E)^{\frac{1}{\phi}}\phi}\right)^{\frac{1}{p}}. \label{e1}
\end{align}
By the property \eqref{eqpw1}, we have
$$
u'(t)=-(-A)^{1-\epsilon} \mathrm{e}^{t A}(-A)^{\epsilon} u_{0},
$$
and the inequality \eqref{eqpw2} yields
$$
\|u'(t)\|_H \le \frac{M_\epsilon}{t^{1-\epsilon}} \|u_0\|_{D\left((-A)^{\epsilon}\right)}.
$$
After integration we obtain
$$
\|u'\|_{L^p\left(0, \theta;H\right)} \le M \frac{M_\epsilon \theta^{\frac{1}{p}-(1-\epsilon)}}{(1-p(1-\epsilon))^{\frac{1}{p}}}.
$$
On the other hand, we have $\|u(t)\|_H \le K_0 \|u_0\|_H$ for all $t\in [0,\theta]$. Then,
$$\|u\|_{L^p\left(0, \theta ; H\right)} \le K_0 \kappa_M\theta^{\frac{1}{p}}.$$
Hence, for $p \in \left(1,\dfrac{1}{1-\epsilon}\right)$, we derive
\begin{align}
\|u\|_{W^{1, p}\left(0, \theta ; H\right)} \le K_0 \kappa_M \theta^{\frac{1}{p}}\left(1+\frac{M_\epsilon}{K_0\theta^{1-\epsilon}(1-p(1-\epsilon))^{\frac{1}{p}}}\right). \label{e2}
\end{align}
Let $0<s<1$. An interpolation inequality for \eqref{e1} and \eqref{e2} yields
\begin{align}
\|u\|_{W^{1-s, p}\left(0, \theta ; H\right)} \le C K_0 \kappa_M \theta^{\frac{1}{p}}\left(1+\frac{M_\epsilon}{K_0\theta^{1-\epsilon}(1-p(1-\epsilon))^{\frac{1}{p}}}\right)^{1-s} \nonumber\\
\times \left(\frac{\Gamma(\frac{1}{\phi})-\Gamma(\frac{1}{\phi}, -c_\psi\log E)}{(-c_\psi\log E)^{\frac{1}{\phi}}\phi}\right)^{\frac{s}{p}}. \label{intrp}
\end{align}
By virtue of the following embedding
$$
W^{1-s, p}\left(0, \theta ; H\right) \subset C\left(\left[0, \theta\right] ; H\right)
$$
for $s \in \left(0,1-\dfrac{1}{p}\right)$, we obtain
\begin{align*}
\|u_0\|_{H} \leqslant K_1\left(\frac{\Gamma(\frac{1}{\phi})-\Gamma(\frac{1}{\phi}, -c_\psi\log E)}{(-c_\psi\log E)^{\frac{1}{\phi}}\phi}\right)^{\frac{s}{p}}.
\end{align*}
By using the observability inequality in time $\theta$,
$$\|u(\theta)\|_H \le \kappa_{\mathrm{obs}} \|\mathbf{C}u\|_{L^2(0,\theta;Y)},$$
we deduce that
$$\left\|u_0\right\|_H \le K_1 \left(\frac{\Gamma\left(\frac{1}{\phi}\right)-\Gamma\left(\frac{1}{\phi}, -c_\psi p \log \left(\kappa_{\mathrm{obs}}\kappa_M^{-1} \mathrm{e}^{-\kappa \theta}\|\mathbf{C}u\|_{L^2(0,\theta;Y)}\right)\right)}{\left(-c_\psi p \log \left(\kappa_{\mathrm{obs}}\kappa_M^{-1} \mathrm{e}^{-\kappa \theta}\|\mathbf{C}u\|_{L^2(0,\theta;Y)}\right)\right)^{\frac{1}{\phi}}\phi}\right)^{\frac{s}{p}}.$$
It follows that,
$$\left\|u_0\right\|_H \le K_1 \left(\frac{\Gamma\left(\frac{1}{\phi}\right)-\Gamma\left(\frac{1}{\phi}, -c_\psi p \log \|\mathbf{C}u\|_{L^2(0,\theta;Y)}\right)}{\left(-c_\psi p \log \|\mathbf{C}u\|_{L^2(0,\theta;Y)}\right)^{\frac{1}{\phi}}\phi}\right)^{\frac{s}{p}}.$$
Indeed, for $c:=c_\psi p$ and $\sigma :=\kappa_{\mathrm{obs}}\kappa_M^{-1} \mathrm{e}^{-\kappa \theta}$, one can prove that the function
$$
r(x) :=\frac{\Gamma\left(\frac{1}{\phi}\right)-\Gamma\left(\frac{1}{\phi}, -c \log(\sigma x) \right)}{\left(-c \log(\sigma x)\right)^{\frac{1}{\phi}}\phi} \frac{\left(-c \log x\right)^{\frac{1}{\phi}}\phi}{\Gamma\left(\frac{1}{\phi}\right)-\Gamma\left(\frac{1}{\phi}, -c \log x \right)}
$$
is monotonic on $(0, \infty)$ (non-decreasing for $\sigma>1$ and non-increasing for $\sigma<1$).
\end{proof}

\begin{remark}
We emphasize that the result of Theorem \ref{logstab} yields also a logarithmic stability estimate for time-independent source terms $f \in H$ in the following system
\begin{empheq}[left = \empheqlbrace]{alignat=2}
\begin{aligned}
& u'(t)=A u(t) + f R(t), \quad t \in (0,\theta],\\
& u(0)=0,
\end{aligned}
\end{empheq}
from the observation $\mathbf{C}u'$, where $R\in H^1(0,\theta)$ is a known function which satisfies $R(0)\neq 0$. We refer to \cite{GaT'11} for more details.
\end{remark}

\section{Application to Ornstein–Uhlenbeck equation \label{sec4}}
In this section, we apply our abstract result to an inverse problem of determining a class of initial data in Ornstein–Uhlenbeck equation in $\mathbb{R}^N$ ($N\ge 1$ is an integer). Consider the following system
\begin{empheq}[left =\empheqlbrace]{alignat=2}
\begin{aligned}
&\partial_t y = \Delta y + B x\cdot \nabla y, \qquad 0<t<\theta , && x\in \mathbb{R}^N, \\
& y\rvert_{t=0}=y_0,   && x \in \mathbb{R}^N,\label{eq4.1}
\end{aligned}
\end{empheq}
where $B \neq 0$ is a real constant $N\times N$-matrix and $y_0$ is the initial condition.

The Ornstein–Uhlenbeck semigroup is defined by Kolmogorov's formula as follows
\begin{equation}
(T(t) f)(x)=\frac{1}{\sqrt{(4 \pi)^{N} \operatorname{det} Q_{t}}} \int_{\mathbb{R}^{N}} \mathrm{e}^{-\frac{1}{4}\left\langle Q_{t}^{-1} y, y\right\rangle} f\left(\mathrm{e}^{t B} x-y\right)\, \d y
\end{equation}
for every $t> 0$ and $f\in C_b(\mathbb{R}^{N})$, where $$Q_t=\int_0^t \mathrm{e}^{s B}\,\mathrm{e}^{s B^*}\, \d s,$$
with $B^*$ denotes the transpose matrix of $B$.

It is well known that the Ornstein–Uhlenbeck semigroup is strongly continuous in $L^2(\mathbb{R}^N)$ (with respect to the Lebesgue measure), but it is not analytic \cite{Me'01}. Therefore, a suitable space to study the Ornstein–Uhlenbeck semigroup is the weighted space $L^2_\mu :=L^2(\mathbb{R}^N,\d \mu)$, where $\mu$ will denote the unique invariant (probability) measure for $(T(t))_{t\ge 0}$, i.e.,
$$\int_{\mathbb{R}^{N}} T(t) f\, \d \mu=\int_{\mathbb{R}^{N}} f \,\d \mu$$
for every $t\ge 0$ and $f\in C_b(\mathbb{R}^{N})$. The existence of $\mu$ is equivalent to the property that the spectrum of the matrix $B$ lies in the open left half plane, that is,
\begin{equation}
\sigma(B) \subset \mathbb{C}_- :=\{z\in \mathbb{C} \colon \mathrm{Re}\,z <0\}. \label{spec}
\end{equation}
In this case, the invariant (Gaussian) measure $\mu$ is given by
$$\d \mu(x)=\frac{1}{\sqrt{(4 \pi)^{N} \operatorname{det} Q_{\infty}}} \mathrm{e}^{-\frac{1}{4}\left\langle Q_{\infty}^{-1} x, x\right\rangle}\, \d x =:\rho(x)\, \d x,$$
where $$Q_\infty:=\int_0^{+\infty} \mathrm{e}^{s B}\, \mathrm{e}^{s B^*}\, \d s.$$
We consider then the Ornstein–Uhlenbeck semigroup $(T(t))_{t\ge 0}$ in the space $L^2_\mu$. Its generator is the operator defined by
\begin{equation}
\begin{aligned}
D(A) & =H_{\mu}^{2}:=\left\{u \in L_{\mu}^{2}: D^{\alpha} u \in L_{\mu}^{2} \text { for }|\alpha| \le 2\right\},\\
A &:=\Delta + B x\cdot \nabla.
\end{aligned}
\end{equation}
See, for instance, \cite{Lu'97, MPRS'03}. We define $H_{\mu}^{s}$ for $s>0$, as follows
\begin{equation}
\begin{aligned}
& H_{\mu}^{s} :=\left\{f \in L_{\mu}^{2}: x \mapsto f(x) \mathrm{e}^{-\frac{1}{8}\left\langle Q_{\infty}^{-1} x, x\right\rangle} \in H^{s}\left(\mathbb{R}^{N}\right)\right\}, \\
&\|f\|_{H_{\mu}^{s}} :=\left\|f \mathrm{e}^{-\frac{1}{8}\left\langle Q_{\infty}^{2} x, x\right\rangle}\right\|_{H^{s}\left(\mathbb{R}^{N}\right)}. 
\end{aligned}\label{fdom}
\end{equation}

Let $I_N$ denote the identity matrix. The next result specifies the angle of analyticity.
\begin{theorem}[\cite{CFMP'05}]
The Ornstein–Uhlenbeck semigroup $(T(t))_{t\ge 0}$ in $L^2_\mu$ is analytic on the sector $\Sigma_{\psi}$, where $\psi \in \left(0,\dfrac{\pi}{2}\right]$ is defined by
$$\cot \psi=2\left\|\frac{1}{2} I_N + Q_{\infty} B^{*}\right\|.$$
Furthermore, $\psi$ is the optimal angle of analyticity.
\end{theorem}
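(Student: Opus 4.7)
The plan is to derive the analyticity angle from a numerical range computation in $L^2_\mu$. First, I would determine the $L^2_\mu$-adjoint of $A=\Delta+Bx\cdot\nabla$. Integrating by parts against the Gaussian weight $\rho$ and using that $\nabla\log\rho=-\tfrac12 Q_\infty^{-1}x$, together with the Lyapunov identity $BQ_\infty+Q_\infty B^*=-I_N$ (obtained by letting $t\to\infty$ in $\tfrac{\mathrm{d}}{\mathrm{d}t}(\mathrm{e}^{tB}Q_\infty \mathrm{e}^{tB^*})$ and using \eqref{spec}), one checks that $A^*=\Delta+B^{\mu}x\cdot\nabla$ is again an Ornstein--Uhlenbeck operator with drift matrix $B^{\mu}=Q_\infty^{-1}B^*Q_\infty$. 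Consequently, $A_s:=\tfrac12(A+A^*)$ and $A_a:=\tfrac{1}{2\mathrm{i}}(A-A^*)$ are again OU-type operators, and after simplification via the Lyapunov relation the antisymmetric part turns out to be driven by the matrix $M:=\tfrac12 I_N+Q_\infty B^*$.

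The heart of the argument is the sectoriality estimate
\begin{equation*}
\bigl|\mathrm{Im}\,\langle Au,u\rangle_{\mu}\bigr|\leq 2\|M\|\cdot\bigl|\mathrm{Re}\,\langle Au,u\rangle_{\mu}\bigr|,\qquad u\in D(A).
\end{equation*}
Integration by parts (with the invariance of $\mu$ eliminating the zero-order remainders) reduces the real part to $-\|\nabla u\|_{L^2_\mu}^2$, while the imaginary part rearranges into $2\,\mathrm{Im}\int (Mx\cdot\nabla u)\,\bar{u}\,\mathrm{d}\mu$. A Cauchy--Schwarz step bounds the latter by $2\|M\|\,\|x\,\bar{u}\|_{L^2_\mu}\|\nabla u\|_{L^2_\mu}$, and a Gaussian Poincaré-type identity linking $\int \langle Q_\infty^{-1}x,x\rangle |u|^2\,\mathrm{d}\mu$ to $\|\nabla u\|_{L^2_\mu}^2$ (using once more the Lyapunov equation) closes the bound with the sharp constant $2\|M\|$.

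Given this, I would invoke the classical characterization of generators of bounded analytic semigroups on a Hilbert space: a densely defined closed operator whose numerical range lies in $\{z\in\mathbb{C}:|\arg(-z)|\le \pi/2-\psi\}$ generates an analytic semigroup of angle $\psi$. Thus $(T(t))_{t\ge0}$ extends analytically to $\Sigma_\psi$ with $\cot\psi=2\|M\|$. For the optimality of $\psi$, the plan is to test the inequality on a sequence of first-order Hermite-type functions $u(x)=(e+\mathrm{i}\alpha f)\cdot x$, where $e,f$ are unit vectors chosen so that $\langle Me,f\rangle=\|M\|$; optimising over $\alpha\in\mathbb{R}$ drives the ratio $|\mathrm{Im}\,\langle Au,u\rangle_\mu|/|\mathrm{Re}\,\langle Au,u\rangle_\mu|$ exactly up to $2\|M\|$, showing no smaller angle works.

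The main obstacle is the second step: organising the symmetric/antisymmetric decomposition of the drift against the Gaussian measure so that the antisymmetric contribution collapses \emph{precisely} to $M=\tfrac12 I_N+Q_\infty B^*$ with constant $2$ and no loss. Systematically using $BQ_\infty+Q_\infty B^*=-I_N$ to trade every occurrence of $B^{\mu}$ for expressions in $B^*$ and $Q_\infty$ is what makes the computation close cleanly, and it is also what guarantees that the reverse direction, via linear Hermite test functions, recovers the same sharp constant and hence the optimality of $\psi$.
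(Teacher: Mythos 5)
First, a point of reference: the paper does not prove this statement at all --- it is imported verbatim from Chill--Fa\v{s}angov\'a--Metafune--Pallara \cite{CFMP'05} --- so your proposal can only be measured against the proof in that reference. Your overall strategy (numerical range of $A$ in $L^2_\mu$, isolation of the skew part via the Lyapunov identity $BQ_\infty+Q_\infty B^*=-I_N$, sharpness via first-order test functions) is the right skeleton and matches the cited proof in outline. Several ingredients check out: integration by parts does give $\mathrm{Re}\,\langle Au,u\rangle_\mu=-\|\nabla u\|^2_{L^2_\mu}$; the matrix $M=\tfrac12 I_N+Q_\infty B^*$ is skew-symmetric (since $M+M^*=I_N+Q_\infty B^*+BQ_\infty=0$); and the linear test functions $u(x)=\langle e,x\rangle+\mathrm{i}\alpha\langle f,x\rangle$, optimized over $\alpha$, do realize the ratio $2\|M\|$ exactly, which settles optimality. (Minor slip: the adjoint drift is $B^\mu=Q_\infty B^*Q_\infty^{-1}=-B-Q_\infty^{-1}$, not $Q_\infty^{-1}B^*Q_\infty$.)

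The genuine gap is the upper bound $\bigl|\mathrm{Im}\,\langle Au,u\rangle_\mu\bigr|\le 2\|M\|\,\|\nabla u\|^2_{L^2_\mu}$, which is the whole content of the theorem, and which your ``Cauchy--Schwarz plus Gaussian Poincar\'e identity'' step cannot deliver. A bound of the form $\int\langle Q_\infty^{-1}x,x\rangle|u|^2\,\d\mu\le C\|\nabla u\|^2_{L^2_\mu}$ is false (the left side is bounded below by $c\|u\|_{L^2_\mu}^2$, as constants show), so any Poincar\'e-type inequality necessarily carries an additive $\|u\|_{L^2_\mu}^2$ term; that only places the numerical range in a \emph{translated} sector and loses the sharp constant, hence the sharp angle $\psi$ --- which is exactly what the paper needs, since $\phi=\pi/(2\psi)$ and $c_\psi$ enter the stability estimate. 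Worse, Cauchy--Schwarz applied to $\int\langle MQ_\infty^{-1}x,\nabla u\rangle\bar u\,\d\mu$ discards precisely the structure that produces the constant $2\|M\|$: the skew-symmetry of $M$ and the fact that $\mathrm{Im}(\bar u\nabla u)$ vanishes for real-valued $u$. Already on the first Wiener chaos with $Q_\infty=\tfrac12 I_N$ one has $\|\,|Q_\infty^{-1}x|\,u\|^2_{L^2_\mu}=4(N+2)\|\nabla u\|^2_{L^2_\mu}$ for $u=\langle v,x\rangle$, so the Cauchy--Schwarz majorant is off by an unbounded (dimension-dependent) factor even on the functions that saturate the true inequality. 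The step that actually wins the sharp constant in \cite{CFMP'05} is a pointwise quadratic-form inequality in the real and imaginary parts of $\bar u\nabla u$ (equivalently, a second-quantization argument using that $u\mapsto\langle MQ_\infty^{-1}x,\nabla u\rangle$ is skew-adjoint and acts chaos by chaos); this idea is absent from your outline and must be supplied for the proof to close.
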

Denoting $\gamma:=2\left\|\frac{1}{2} I_N + Q_{\infty} B^{*}\right\|$, we then have
\begin{equation}
\psi=\frac{\pi}{2}-\psi_{Q_{\infty} B^{*}},
\end{equation}
where $\psi_{Q_{\infty} B^{*}}:=\arctan \gamma$ is the spectral angle of $Q_{\infty} B^{*}$.

\begin{remark}
The operator $A$ is self-adjoint if and only if $Q_\infty B^*=BQ_\infty$ (see \cite{CG'02}). In this case, the identity $BQ_\infty + Q_\infty B^*=-I_N$ implies that $\dfrac{1}{2} I_N + Q_{\infty} B^{*}=0$, and then $\gamma=0$ and $\psi=\dfrac{\pi}{2}$. In general, the angle of analyticity of $(T(t))_{t\ge 0}$ in $L^2_\mu$ is strictly smaller than $\dfrac{\pi}{2}$. For example, if we consider the Ornstein–Uhlenbeck operator $A$ in $\mathbb{R}^2$ with $$B=\left(\begin{array}{cc}-1 & 2 \\ 0 & -1\end{array}\right),$$ then the spectral angle of $Q_{\infty} B^{*}$ is $\psi_{Q_{\infty} B^{*}}=\arctan 1=\dfrac{\pi}{4}$ (see \cite[Example 1]{CFMP'05}), and the angle of analyticity is $\psi=\dfrac{\pi}{2}-\psi_{Q_{\infty} B^{*}}=\dfrac{\pi}{4}$.
\end{remark}

Finally, let $\lambda>0$ be such that the semigroup of $A-\lambda$ is of negative type. Since $\lambda -A$ is a maximal accretive operator on $L^2_\mu$ (see, e.g., \cite{CFMP'05}), then by \cite[Theorem 2.30]{Ya'10} it has a bounded $\mathcal{H}^\infty $-calculus on $L^2_\mu$. \cite[Proposition 2.3]{Lu'97} allows us to characterize the domain of fractional powers $(\lambda-A)^\epsilon$ for $\epsilon \in (0,1)$, in terms of interpolation spaces. See \cite[Theorem 16.3]{Ya'10}.
\begin{lemma}
Let $[\cdot, \cdot]_\epsilon$ denote the complex interpolation functor. Then the following identity holds
\begin{align}
D((\lambda-A)^\epsilon)=[L^2_\mu,H^2_\mu]_\epsilon=H^{2\epsilon}_\mu, \qquad 0<\epsilon<1.
\end{align}
with equivalent norms.
\end{lemma}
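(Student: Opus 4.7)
The plan is to establish the two identifications in the lemma separately, combining an abstract interpolation result for operators with bounded $\mathcal{H}^\infty$-calculus with the special structure of the weighted Gaussian spaces $H^s_\mu$.

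For the first equality, $D((\lambda - A)^\epsilon) = [L^2_\mu, H^2_\mu]_\epsilon$, I would invoke the operator-theoretic machinery already assembled in the text. The operator $\lambda - A$ is maximal accretive on the Hilbert space $L^2_\mu$ (by \cite{CFMP'05}), so Yagi's Theorem 2.30 furnishes a bounded $\mathcal{H}^\infty$-functional calculus for $\lambda - A$ on $L^2_\mu$; in particular, it has bounded imaginary powers. Yagi's Theorem 16.3 then identifies the complex interpolation space $[L^2_\mu, D(\lambda-A)]_\epsilon$ with the fractional-power domain $D((\lambda-A)^\epsilon)$, with equivalent norms. Since $D(\lambda - A) = D(A) = H^2_\mu$, this yields the first equality directly.

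For the second equality, $[L^2_\mu, H^2_\mu]_\epsilon = H^{2\epsilon}_\mu$, the idea is to conjugate the whole scale of weighted spaces by the Gaussian weight and reduce to the standard Sobolev scale on $\mathbb{R}^N$. Concretely, by the definition recorded in \eqref{fdom}, the multiplication operator $Tf(x) := f(x)\, \mathrm{e}^{-\frac{1}{8}\langle Q_\infty^{-1} x, x\rangle}$ is (up to a multiplicative constant) an isometric isomorphism from $L^2_\mu$ onto $L^2(\mathbb{R}^N)$ and an isomorphism from $H^2_\mu$ onto $H^2(\mathbb{R}^N)$. Exactness of the complex interpolation functor then gives
$$
[L^2_\mu, H^2_\mu]_\epsilon \;\cong\; [L^2(\mathbb{R}^N),\, H^2(\mathbb{R}^N)]_\epsilon \;=\; H^{2\epsilon}(\mathbb{R}^N),
$$
the last equality being the classical Sobolev interpolation identity. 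Pulling back via $T^{-1}$ yields $[L^2_\mu, H^2_\mu]_\epsilon = H^{2\epsilon}_\mu$ with equivalent norms. An alternative, more direct route is to combine the first step with the characterization in \cite[Proposition 2.3]{Lu'97}, which already identifies $D((\lambda-A)^\epsilon)$ with $H^{2\epsilon}_\mu$.

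The main subtlety in the first (self-contained) approach is showing that multiplication by $\mathrm{e}^{-\frac{1}{8}\langle Q_\infty^{-1} x, x\rangle}$ really provides an isomorphism $H^2_\mu \to H^2(\mathbb{R}^N)$: differentiating brings down the unbounded linear factor $\tfrac14 Q_\infty^{-1}x$, so one must check that the two descriptions of $H^2_\mu$ implicit in the paper --- the weighted second-order Sobolev space $\{u \in L^2_\mu : D^\alpha u \in L^2_\mu,\; |\alpha|\le 2\}$ (used to define $D(A)$) and the transformed space in \eqref{fdom} --- coincide with equivalent norms. This compatibility is precisely the fine domain characterization of the Ornstein--Uhlenbeck operator in $L^2_\mu$ proved in \cite{Lu'97, MPRS'03}; granted this, the remainder of the proof is a routine application of the $\mathcal{H}^\infty$-calculus interpolation theorem and standard Sobolev interpolation.
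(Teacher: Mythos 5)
Your proposal follows essentially the same route as the paper, which proves this lemma only by citation: maximal accretivity of $\lambda-A$ gives a bounded $\mathcal{H}^\infty$-calculus (\cite[Theorem 2.30]{Ya'10}), hence $D((\lambda-A)^\epsilon)=[L^2_\mu,D(A)]_\epsilon$ (\cite[Theorem 16.3]{Ya'10}), and the identification with $H^{2\epsilon}_\mu$ comes from \cite[Proposition 2.3]{Lu'97}. Your additional observation --- that one must reconcile the weighted-Sobolev description of $D(A)=H^2_\mu$ with the Gaussian-conjugated definition in \eqref{fdom} before running the exact-interpolation argument --- is a genuine point of care that the paper leaves implicit, and it is correctly resolved by the domain characterizations in \cite{Lu'97, MPRS'03}.
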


Next, we recall the final state observability result for the system \eqref{eq4.1}. Let $\omega \subset \mathbb{R}^N$ be a nonempty open set. The following condition
\begin{equation}
\exists \delta, r>0, \forall y \in \mathbb{R}^N, \exists y' \in \omega, \quad B\left(y', r\right) \subset \omega \text { and }\left|y-y'\right|<\delta, \label{obsreg}
\end{equation}
is sufficient for final state observability of the system \eqref{eq4.1} to hold from the observation region $\omega$ at any positive time $\theta>0$. The property \eqref{obsreg} means that the subdomain $\omega$ is sufficiently stretched out in $\mathbb{R}^N$ \cite{LRM'16}.

We consider the observation distributed on the region $\omega$, which is given by the following observation operator $$\mathbf{C}u(t,x):=\mathds{1}_\omega(x)u(t,x),$$
which is admissible. We shall use the following final state observability result for system \eqref{eq4.1}, recently proved in \cite[Corollary 1.7]{BP'18}.
\begin{proposition}
Let $\theta>0$ be fixed and $\omega \subset \mathbb{R}^N$ be an open set satisfying \eqref{obsreg}. Consider $y$ the mild solution of \eqref{eq4.1}. If the spectral condition \eqref{spec} holds, then there exists a positive constant $\kappa_\theta=\kappa_\theta(\omega,\theta)$ such that for all $y_0 \in L^2_\mu$, we have
\begin{equation}
\|y(\theta,\cdot)\|_{L^2_\mu}^2 \leq \kappa_\theta \int_0^\theta \|y(t,\cdot)\|_{L^2_\mu(\omega)}^2\,\d t, \label{obsineq}
\end{equation}
where $L^2_\mu(\omega):=L^2(\omega,\d \mu)$. 
\end{proposition}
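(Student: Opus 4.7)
I would follow the Lebeau--Robbiano strategy, adapted to the Ornstein--Uhlenbeck setting as in \cite{BP'18}. The two core ingredients are: (i) a quantitative spectral inequality on the thick observation set $\omega$, and (ii) a dissipation estimate for the high-frequency part of the semigroup; a telescoping argument on $(0,\theta)$ then bridges the two to yield \eqref{obsineq}.

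First I would diagonalize the symmetric part of $A$ in $L^2_\mu$ using the Hermite-type basis adapted to the invariant Gaussian measure: the eigenfunctions are rescaled products of Hermite polynomials, with eigenvalues parametrized by multi-index length $|\alpha|$. Writing $\Pi_\Lambda$ for the spectral projector onto modes of eigenvalue $\le\Lambda$, the key analytic input is the spectral inequality
$$\|u\|_{L^2_\mu}^2 \le C_0\, \mathrm{e}^{C_0 \sqrt{\Lambda}}\, \|u\|_{L^2_\mu(\omega)}^2, \qquad u \in \mathrm{Range}(\Pi_\Lambda),$$
valid for every thick set $\omega$ satisfying \eqref{obsreg}. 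This is a Logvinenko--Sereda-type bound for Hermite expansions and relies on the entire analytic extension of the Hermite functions together with the geometric condition \eqref{obsreg}. This step is the main obstacle: obtaining the sub-exponential $\mathrm{e}^{C\sqrt{\Lambda}}$ dependence (rather than the naive $\mathrm{e}^{C\Lambda}$) is exactly what allows the Lebeau--Robbiano machinery to close, and it requires careful complex-analytic estimates for Hermite functions together with a covering argument tailored to the thickness parameters $\delta, r$ appearing in \eqref{obsreg}.

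Second, I would establish a dissipation estimate of the form $\|(I-\Pi_\Lambda)T(t)y_0\|_{L^2_\mu} \le \mathrm{e}^{-\Lambda t}\|y_0\|_{L^2_\mu}$ for the high-frequency part. Here I would exploit the decomposition of $A$ into its self-adjoint part (governing the dissipation) plus a first-order skew part, controlling the perturbation by the analyticity of $(T(t))_{t\ge 0}$ on the sector $\Sigma_\psi$ from \cite{CFMP'05} and the spectral localization property of $\Pi_\Lambda$. This step is essentially soft once the spectral decomposition is set up.

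Finally, I would split $(0,\theta)$ into a dyadic sequence of subintervals $I_k$ of geometrically decreasing length and alternate on them: on the first half of $I_k$ apply the spectral inequality to bound $\|\Pi_{\Lambda_k}y(t)\|_{L^2_\mu}$ by the $L^2_\mu(\omega)$ observation, and on the second half apply the dissipation estimate to absorb the orthogonal complement into the next iteration. Choosing $\Lambda_k$ growing sufficiently fast so that the $\mathrm{e}^{C\sqrt{\Lambda_k}}$ loss in the spectral inequality is beaten by the $\mathrm{e}^{-\Lambda_k |I_k|}$ dissipation gain, and summing the resulting telescoping bounds geometrically, yields a finite constant $\kappa_\theta=\kappa_\theta(\omega,\theta)$ and the desired inequality \eqref{obsineq}. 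The crux of the entire argument remains the spectral inequality for Hermite expansions on thick sets.
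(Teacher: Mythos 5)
First, note that the paper does not actually prove this proposition: it is imported verbatim from \cite[Corollary 1.7]{BP'18}, so there is no internal argument to compare against. Your outline does correctly identify the strategy used in that reference --- an adapted Lebeau--Robbiano iteration combining a spectral inequality for finite combinations of Hermite-type functions on sets satisfying \eqref{obsreg} with a high-frequency dissipation estimate --- and the sub-exponential cost $\mathrm{e}^{C\sqrt{\Lambda}}$ you single out is indeed the decisive feature of the spectral inequality. (In \cite{BP'18} this is obtained after conjugating the Ornstein--Uhlenbeck operator on $L^2_\mu$ by $\sqrt{\rho}$ into a harmonic-oscillator-type quadratic operator on $L^2(\mathbb{R}^N,\mathrm{d}x)$, using that elements of the range of $\Pi_\Lambda$ are entire of order $2$ and essentially localized in a ball of radius of order $\sqrt{\Lambda}$.)

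There is, however, a genuine gap in your second ingredient. You assert that the bound $\|(I-\Pi_\Lambda)T(t)y_0\|_{L^2_\mu}\le \mathrm{e}^{-\Lambda t}\|y_0\|_{L^2_\mu}$ is ``essentially soft once the spectral decomposition is set up.'' It is not: since $B\neq 0$ and in general $Q_\infty B^*\neq BQ_\infty$, the operator $A$ is neither self-adjoint nor normal on $L^2_\mu$, the Hermite-type eigenprojections are not orthogonal, and $\Pi_\Lambda$ does not commute with $T(t)$. Consequently $(I-\Pi_\Lambda)T(t)$ cannot be estimated by reading off eigenvalues, and the clean bound $\mathrm{e}^{-\Lambda t}$ with constant $1$ is simply not available; sectorial analyticity of $(T(t))_{t\ge 0}$ from \cite{CFMP'05} gives smoothing in the scale of $D((\lambda-A)^\alpha)$ but not, by itself, exponential decay of the high Hermite modes. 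Establishing the correct substitute --- quantitative regularizing/dissipation estimates of the form $\|(I-\Pi_\Lambda)T(t)\|\le C\,\mathrm{e}^{-c\Lambda\min(t,1)}$, with loss of constants, for semigroups generated by accretive quadratic operators --- is one of the two main technical achievements of \cite{BP'18}, on a par with the spectral inequality, and it rests on the fine Gevrey-type smoothing of such semigroups rather than on soft spectral reasoning. As written, your plan defers exactly the two lemmas that constitute the proof and misjudges the difficulty of one of them; it is a correct road map to the cited result rather than a proof of it.
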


The following result of logarithmic stability is an application of Theorem \ref{logstab}.
\begin{proposition}\label{prop4.4}
Let $p \in \left(1,\dfrac{1}{1-\epsilon}\right)$ and $s \in \left(0,1-\dfrac{1}{p}\right)$. If the spectral condition \eqref{spec} holds, then there exists a positive constant $K_1(M,\epsilon,K,\kappa,\kappa_\theta,\theta,p,s)$ such that, for all $y_0 \in H_\mu^{2\epsilon}$ with $\left\|y_0\right\|_{H_\mu^{2\epsilon}} \le M$, we have
\begin{equation}
\|y_0\|_{L^2_\mu} \le K_1 \left(\frac{\Gamma\left(\frac{1}{\phi}\right)-\Gamma\left(\frac{1}{\phi}, -c_\psi p\log \|y\|_{L^2\left(0,\theta ; L^2_\mu(\omega)\right)}\right)}{\left(-c_\psi p\log \|y\|_{L^2\left(0,\theta ; L^2_\mu(\omega)\right)}\right)^{\frac{1}{\phi}}\phi}\right)^{\frac{s}{p}},
\end{equation}
where $y$ is the solution of system \eqref{eq4.1}. Moreover, we have
\begin{equation}
\|y_0\|_{L^2_\mu} \le K_1 \left(\frac{\Gamma\left(\frac{1}{\phi}\right)}{\left(-c_\psi p\log \|y\|_{L^2\left(0,\theta ; L^2_\mu(\omega)\right)}\right)^{\frac{1}{\phi}}\phi}\right)^{\frac{s}{p}},
\end{equation}
provided that $\|y\|_{L^2\left(0,\theta ; L^2_\mu(\omega)\right)}$ is sufficiently small.
\end{proposition}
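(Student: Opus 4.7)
The plan is to show that Proposition \ref{prop4.4} is a direct specialization of Theorem \ref{logstab} to the concrete data $H=L^2_\mu$, $Y=L^2_\mu(\omega)$ and $\mathbf{C}u:=\mathds{1}_\omega u$, once the operator and the observation inequality have been put into the abstract form required in Section \ref{sec3}. Since Theorem \ref{logstab} is stated for a generator of an analytic semigroup \emph{of negative type}, I would first replace $A=\Delta+Bx\cdot\nabla$ by the shifted generator $\tilde A := A-\lambda$ for $\lambda>0$ chosen as in the discussion preceding the lemma identifying $D((\lambda-A)^\epsilon)$. The shift preserves the angle $\psi=\frac{\pi}{2}-\arctan\gamma$ given by the theorem of Chill--Fa\v{s}angov\'a--Metafune--Pallara, so the constants $\phi$ and $c_\psi$ entering the stability estimate are the ones associated with that same $\psi$.

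Next, I would identify the admissible initial data set. The lemma after the $\mathcal{H}^\infty$-calculus statement gives $D((\lambda-A)^\epsilon)=H^{2\epsilon}_\mu$ with equivalent norms, hence the abstract set $\mathcal{I}_{\epsilon,M}$ in \eqref{init1}, built from the shifted generator, coincides (up to an inessential equivalence constant that can be absorbed into $K_1$) with the ball of radius $M$ in $H^{2\epsilon}_\mu$. The admissibility of $\mathbf{C}$ in the sense of \eqref{adm1} is trivial because $\mathbf{C}\in\mathcal{L}(L^2_\mu,L^2_\mu(\omega))$ with norm at most one. The observability inequality \eqref{obsineq} for $(T(t))_{t\ge 0}$, which is exactly \eqref{obs1} for $A$, transfers to $\tilde A$: if $y$ solves \eqref{eq4.1} and $\tilde y(t):=\mathrm{e}^{-\lambda t}y(t)$ is the mild solution associated with $\tilde A$, then $\|\tilde y(\theta)\|_{L^2_\mu}^2=\mathrm{e}^{-2\lambda\theta}\|y(\theta)\|_{L^2_\mu}^2$ and $\|\mathbf{C}\tilde y(t)\|_{L^2_\mu(\omega)}^2=\mathrm{e}^{-2\lambda t}\|\mathbf{C}y(t)\|_{L^2_\mu(\omega)}^2$, so \eqref{obsineq} yields \eqref{obs1} for $\tilde A$ with a modified constant $\kappa_{\mathrm{obs}}$ depending on $\kappa_\theta,\lambda,\theta$.

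With these three pieces in place the hypotheses of Theorem \ref{logstab} are fulfilled. Applying it to $\tilde u_0:=y_0\in\mathcal{I}_{\epsilon,M}$ directly produces \eqref{stab1} in the form stated in Proposition \ref{prop4.4}, after renaming the observation norm and reabsorbing the factors $\mathrm{e}^{-\lambda t}$ (which are bounded away from $0$ and $\infty$ on $[0,\theta]$) into $K_1$. The second inequality then follows from the first because $\Gamma(\frac{1}{\phi},-c_\psi p\log\|y\|_{L^2(0,\theta;L^2_\mu(\omega))})\to 0$ as $\|y\|_{L^2(0,\theta;L^2_\mu(\omega))}\to 0^+$, by the definition \eqref{igm1} of the incomplete Gamma function.

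I do not expect any genuine obstacle: the proof is a verification rather than a new argument. The only point requiring care is the bookkeeping of the shift $\lambda$, making sure that the constants $K,\kappa,K_1$ produced by the abstract theorem can indeed be gathered into a single constant $K_1=K_1(M,\epsilon,K,\kappa,\kappa_\theta,\theta,p,s)$, and that the shift-induced multiplicative factors in the observation norm do not alter the leading logarithmic dependence stated in \eqref{stab1}.
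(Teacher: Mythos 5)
Your proposal is correct and follows essentially the same route as the paper, which presents Proposition \ref{prop4.4} simply as a specialization of Theorem \ref{logstab} after assembling the three ingredients you list: the analyticity angle $\psi$ from Chill--Fa\v{s}angov\'a--Metafune--Pallara, the identification $D((\lambda-A)^\epsilon)=H^{2\epsilon}_\mu$, and the final state observability inequality of Beauchard--Pravda-Starov. Your explicit bookkeeping of the shift $A\mapsto A-\lambda$ and of how the observability constant transforms is a slightly more careful writing-out of what the paper leaves implicit, but it is the same argument.
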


\begin{remark}
We close the paper with the following remarks:

a) The result of Proposition \ref{prop4.4} holds true for the general Ornstein–Uhlenbeck operator given by
$$A=\dv(Q \nabla) + B x \cdot \nabla,$$
where $Q=Q^*$ is a real constant $N\times N$-matrix which is positive definite, provided the following changes are made:
$$Q_{\infty}=\int_{0}^{+\infty} \mathrm{e}^{s B}\, Q\, \mathrm{e}^{s B^{*}}\, \d s,$$
$$\gamma:=\cot \psi=2\left\|\frac{1}{2} I_N +Q^{-1 / 2} Q_{\infty} B^{*} Q^{-1 / 2}\right\|.$$

b) In Proposition \ref{prop4.4}, we considered initial data in the weighted space $L^2_\mu$ with the invariant measure $\mu$, since we used $L^2_\mu$ final state observability \eqref{obsineq}. We notice that the Ornstein–Uhlenbeck equation is also finale state observable in $L^2(\mathbb{R}^N)$ (see \cite[Theorem 1.3]{BP'18}). However, our approach does not allow one to conclude a stability result for a class of initial data in $L^2(\mathbb{R}^N)$ due to the lack of analyticity of the semigroup. It would be of much interest to prove a similar result with an observation in $L^2(\mathbb{R}^N)$ norm for initial data in the admissible set
$$\mathcal{I}_{\epsilon,M}:=\{u_0\in H^{2\epsilon}(\mathbb{R}^N) \colon \|u_0\|_{H^{2\epsilon}(\mathbb{R}^N)} \le M\}.$$

c) We stress that our result in Theorem \ref{logstab} is still valid even in Banach spaces, with final state observability in $L^r(0,\theta; Y)$ ($r\in [1, \infty]$). For instance, even for a self-adjoint operator associated with a sub-Markovian semigroup, its realization on $L^p$ generates an analytic semigroup on the sector of angle $\psi_p:=\frac{\pi}{2}-\arctan \frac{|p-2|}{2 \sqrt{p-1}}$ for $p \in(1, \infty)$, see \cite[Theorem 3.13]{Ou'04}. Nevertheless, we restricted ourselves to Hilbert spaces for the sake of simplicity and compatibility with the application we consider.
\end{remark}

\section{Conclusion}
In this paper, we have considered an inverse problem for initial datum in a general class of parabolic systems. A general logarithmic convexity result is investigated. In particular, an explicit form of the harmonic function appearing in the logarithmic convexity inequality is obtained depending on the analyticity angle of the semigroup associated to the system. Then, a sharp lower bound for the harmonic function is derived. Based on the final state observability of the system, a conditional logarithmic stability for the initial data is established. Finally, the abstract result has been illustrated by considering the Ornstein–Uhlenbeck equation in $\mathbb{R}^N$, which is the prototype of equations with unbounded gradient coefficients.

\end{document}